\author{Rafael Torres}
\title[Involutions, $\mathcal{F}$-structures, entropy, and positive scalar curvature]{On entropies, $\mathcal{F}$-structures, and scalar curvature of certain involutions}
\address{Scuola Internazionale Superiori di Studi Avanzati\\ Via Bonomea 265\\34136\\Trieste\\Italy}
\email{rtorres@sissa.it}
\date{January 10th, 2014}
\subjclass[2010]{57R57, 57R55, 53C25}
\theoremstyle{plain}
\newtheorem{theorem}[equation]{Theorem}
\newtheorem{corollary}[equation]{Corollary}
\newtheorem{proposition}[equation]{Proposition}
\newtheorem{remark}[equation]{Remark}
\newtheorem{lemma}[equation]{Lemma}
\theoremstyle{definition}
\newtheorem{definition}[equation]{Definition}
\newcommand{\Q}{\mathbb{Q}}
\newcommand{\R}{\mathbb{R}}
\newcommand{\Z}{\mathbb{Z}}
\newcommand{\N}{\mathbb{N}}
\newcommand{\C}{\mathbb{C}}
\begin{document}

\maketitle

\emph{Abstract}: In this short note, we analyze geometric properties of orbit spaces of certain involutions in dimensions four, five, and six. We consider constructions of $\mathcal{F}$-structures on manifolds of dimension at least four that allows us to study minimal entropy, minimal volume, collapse with bounded curvature, and sign of the Yamabe invariant, and its vanishing, building on work of Paternain-Petean. The existence of Riemannian metrics of vanishing topological entropy on the orbit spaces is investigated as well.

\section{Introduction and main results}

For a closed connected Riemannian manifold $(M, g)$ with finite fundamental group, we consider the following invariants 
\begin{itemize}
\item existence of an $\mathcal{F}$-structure \cite{[G], [CG], [PP1]}, 
\item minimal entropy $h(M)$ \cite{[PP1]},
\item topological entropy $h_{top}(g)$ \cite{[Ma], [GPa]}, 
\item minimal volumes $MinVol(M)$ and $Vol_K(M)$ \cite{[G]}, and collapsing with sectional curvature $K$ bounded form below \cite{[CG], [PP1]}, 
\item sign of the scalar curvature, i.e., sign of the Yamabe invariant $\mathcal{Y}(M)$ \cite{[Sc]}, and
\item vanishing $\mathcal{Y}(M) = 0$ and its (non-)realization \cite{[L2]}.
\end{itemize}

Due to rescalling properties of these invariants, in order to obtain interesting data we assume the normalization $Vol((M, g)) = 1$ of the volume throughout the manuscript. A geometric expression of the topological entropy of $(M, g)$ is given by a formula due to Ma\~n\'e \cite{[Ma]} in terms of geodesic arcs as the following mean value. Consider any two points $p, q\in M$, a positive number $T > 0$, and let $n_T(p, q)$ be the number of geodesic arcs joining $p$ and $q$ with length less or equal to $T$. Ma\~n\'e's formula is 
\begin{equation}
h_{top}(g) = \underset{T\rightarrow \infty}\lim \frac{1}{T} \log\int_{M\times M} n_T(p, q)\, \mathrm{d}p \mathrm{d}q .
\end{equation}

The minimal entropy $h(M)$ of $(M, g)$ is the infimum of the topological entropy of the geodesic flow of a smooth metric $g$ on $M$. The simplicial and spherical volumes, as well as the volume entropy vanish for manifolds with finite fundamental group.\\

Paternain and Petean showed that the presence of an $\mathcal{F}$-structure on a manifold has interesting geometric implications. An $\mathcal{F}$-structure is a generalization of a torus action on a manifold introduced by Cheeger-Gromov \cite{[CG]} (see Section \ref{Section S} for a precise definition). It is proven in \cite[Theorem A]{[PP1]} that in the presence of such a structure the minimal entropy vanishes.\\

The existence of an $\mathcal{F}$-structure also implies the vanishing of the minimal volume \begin{equation} Vol_K(M):= \underset{g}{\inf} \{Vol(M, g) : K_g \geq -1 \},\end{equation} \cite[Theorem 7.2]{[PP1]} where $K_g$ is the sectional curvature of a metric $g$, and the manifold collapses with sectional curvature bounded from below, as previously studied by Cheeger-Gromov \cite{[CG]}. Recall that a smooth manifold $M$ \emph{collapses with (Ricci/scalar) curvature bounded from below} if and only if there is a sequence of smooth Riemannian metrics $\{g_j\}$ for which the (Ricci/scalar respectively) sectional curvature is uniformly bounded from below, but their volumes $\{Vol(M, g_j)\}$ approach zero as $j \rightarrow \infty$. \\

There is the following relation between collapsing and the Yamabe invariant, whose definition we first recall. Let $\gamma:= [g] = \{ug : M\overset{u}{\rightarrow} \R^+\}$ be a conformal class of Riemannian metrics on $(M, g)$. The Yamabe constant of $(M, \gamma)$ is \begin{equation} \mathcal{Y}(M, \gamma):= \underset{g\in \gamma}{\inf} \frac{\int_M scal_g dvol_g}{(Vol(M, g))^{2/n}}.\end{equation} The Yamabe invariant of $M$ is defined as \begin{equation} \mathcal{Y}(M):= \underset{\gamma}{\sup} \mathcal{Y}(M, \gamma). \end{equation} If the dimension of the manifold is at least three and it admits an $\mathcal{F}$-structure, then the manifold collapses with bounded scalar curvature, i.e., $\mathcal{Y}(M) \geq 0$ \cite[Theorem 7.2]{[PP1]}. Petean has shown that the Yamabe invariant of a simply connected closed manifold of dimension greater or equal to five is nonnegative \cite{[Pe]}. LeBrun \cite[Theorem A]{[L2]} determined the sign of the Yamabe invariant of the underlying smooth 4-manifold of a complex algebraic surface in terms of its Kodaira dimension, and studied collapse of elliptic surfaces $E(n)$ \cite[Section 5]{[L2]}. Our first result extends this work to inequivalent smooth structures on $E(n)$ that are built using generalized fiber sums and logarithmic transformations as in Szab\'o \cite{[Sz]}, and by using Fintushel-Stern's Knot surgery \cite{[FS]}.

\begin{theorem}{\label{Theorem Extra}} Every element of the infinite set $\{E(n)_K:$ K is a knot in $S^3\}$ of pairwise nondiffeomorphic 4-manifolds constructed using Fintushel-Stern's Knot surgery that have the homeomorphism type of the elliptic surface $E(n)$ with $n\geq 2$ has an $\mathcal{F}$-structure. Consequently, for every such 4-manifold $M$ we have \begin{equation} h(M) = 0 = Vol_K(M), \end{equation} and $M$ collapses with curvature bounded from below. 

Moreover, the Yamabe invariant satisfies \begin{equation} \mathcal{Y}(M) = 0 \end{equation} and it is realized by $M$, i.e., there exists a scalar-flat Riemannian metric $(M, g)$ if and only if $M$ is diffeomorphic to the K3 surface $E(2)$.

\end{theorem}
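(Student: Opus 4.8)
The theorem has three parts: (1) existence of an $\mathcal{F}$-structure on every $E(n)_K$ with $n \geq 2$, yielding vanishing minimal entropy, vanishing $Vol_K$, and collapse; (2) $\mathcal{Y}(E(n)_K) = 0$; (3) the scalar-flat metric exists iff $M \cong E(2)$.

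For part (1): The starting point is that $E(n)$ for $n\geq 2$ is known to admit an $\mathcal{F}$-structure (this should follow from LeBrun's analysis of elliptic surfaces cited above, or from the fact that $E(n)$ is an honest elliptic fibration with a circle action on a neighborhood of a section/fiber configuration). Knot surgery $E(n)_K = E(n)\setminus (T^2\times D^2)\cup (S^1\times (S^3\setminus K))$ modifies $E(n)$ only in a neighborhood of a cusp fiber $T^2\times D^2$. The key idea is to build the $\mathcal{F}$-structure on $E(n)_K$ by keeping the $\mathcal{F}$-structure on the complement $E(n)\setminus (T^2\times D^2)$ and producing a compatible $\mathcal{F}$-structure on the knot surgery piece $S^1\times (S^3\setminus K)$: the circle factor $S^1$ gives a free circle action on this piece, which is a legitimate (polarized) $\mathcal{F}$-structure there, and one arranges the overlap so that on $T^2\times(\text{annulus})$ the two structures share a common circle subaction (the $S^1$ that bounds in the glued-in solid-torus direction). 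Once the $\mathcal{F}$-structure exists, \cite[Theorem A]{[PP1]} gives $h(M)=0$, \cite[Theorem 7.2]{[PP1]} gives $Vol_K(M)=0$, collapse with curvature bounded below, and $\mathcal{Y}(M)\geq 0$.

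For parts (2) and (3): Since $M = E(n)_K$ is homeomorphic to $E(n)$ with $n\geq 2$, it is a minimal simply connected symplectic 4-manifold (knot surgery on a symplectic manifold along a symplectic torus with $K$ a fibered knot yields a symplectic manifold, and minimality follows from Fintushel-Stern / Usher). It has nonzero Seiberg-Witten invariants (the knot surgery formula expresses $SW_{E(n)_K}$ in terms of the Alexander polynomial of $K$ and $SW_{E(n)}$, which is nonzero for $n\geq 2$). By the standard obstruction (Witten), a 4-manifold with nonzero Seiberg-Witten invariant admits no metric of positive scalar curvature, so $\mathcal{Y}(M)\leq 0$; combined with $\mathcal{Y}(M)\geq 0$ from part (1), this forces $\mathcal{Y}(M)=0$. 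For the realization question, one invokes LeBrun's theorem: a 4-manifold with nonzero Seiberg-Witten invariant and $\mathcal{Y}(M)=0$ admits a scalar-flat metric only if it admits a scalar-flat Kähler metric, which forces (by the Kodaira classification of scalar-flat Kähler surfaces, equivalently $c_1 = 0$ up to sign / being a minimal properly elliptic or $K3$ surface with the right numerology) the manifold to be $K3$. Since $E(n)_K$ for $n \geq 3$ or $K$ nontrivial is not diffeomorphic to $K3 = E(2)$ (Seiberg-Witten basic classes distinguish them), the scalar-flat metric exists precisely when $M$ is diffeomorphic to $E(2)$, and $E(2)$ carries its Calabi-Yau (Ricci-flat, hence scalar-flat) metric.

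The main obstacle is the construction in part (1): one must verify carefully that the circle action on the knot surgery piece $S^1\times(S^3\setminus K)$ and the pre-existing $\mathcal{F}$-structure on $E(n)\setminus(T^2\times D^2)$ glue to a global $\mathcal{F}$-structure in the sense of Cheeger-Gromov — i.e., that on the gluing region the local torus actions are compatible (share a common subtorus acting on overlaps), which is where the precise formulation of $\mathcal{F}$-structures from Section \ref{Section S} must be used. The Seiberg-Witten input for parts (2)-(3) is standard once the knot surgery formula and LeBrun's scalar-flat-Kähler rigidity are quoted.
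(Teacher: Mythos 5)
Your parts (1) and (2) follow the paper's route exactly: the paper (Proposition \ref{Proposition Knot}) equips $E(n)-\nu(F)$ with the Paternain--Petean $\mathcal{T}$-structure and the piece $(S^3-\nu(K))\times S^1$ with the circle action on the $S^1$-factor, checks commutativity on the boundary $3$-torus, and then quotes \cite[Theorems A and B, Theorem 7.2]{[PP1]} for $h=0$, $Vol_K=0$, collapse, and $\mathcal{Y}\geq 0$; the knot surgery formula $SW_{E(n)_K}=SW_{E(n)}\cdot\Delta_K$ plus Witten's obstruction then gives $\mathcal{Y}\leq 0$, exactly as you say. Part (3) is where you diverge, and where your argument has a soft spot. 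The paper's route is purely Riemannian: a scalar-flat metric on a manifold admitting no positive-scalar-curvature metric must be Ricci-flat; since $M$ is homeomorphic to $E(n)$ one has $c_1^2=2c_2+3\sigma=0$, and the Gauss--Bonnet/signature integrand $\frac{scal^2}{24}-\frac{1}{2}|\overset{\circ}{Ric}|^2+2|W^+|^2$ then forces $W^+=0$, so the metric is scalar-flat and half-conformally flat, and Hitchin's theorem \cite[13.30 Theorem]{[Be1]} identifies $M$ with the K3 surface. You instead invoke ``LeBrun's theorem: nonzero SW and $\mathcal{Y}=0$ imply a scalar-flat metric exists only if a scalar-flat K\"ahler metric exists,'' which is not a theorem in that form and cannot simply be quoted; to make your route rigorous you should replace it either by the paper's Ricci-flat/Hitchin argument or by the Weitzenb\"och/Seiberg--Witten argument (a scalar-flat metric forces every monopole class $a$ to satisfy $a^+=0$, hence $a=0$ in real cohomology since $a^2=0$ for the fiber-multiple basic classes of $E(n)_K$, which is impossible for $n\geq 3$ or $\Delta_K\neq 1$). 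Either repair yields the same conclusion, so the gap is one of justification rather than of strategy, but as written the key rigidity step in your part (3) is unsupported.
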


Conditions for the Yamabe invariant to vanish in the presence of an $\mathcal{F}$-structure have been previously studied by Su\'arez-Serrato \cite{[P0]}. Given our motivation regarding the existence of $\mathcal{F}$-structures, we study constructions on a wide range of manifolds in Section \ref{Section S}.\\ 

Paternain \cite{[GPa1]} and Paternain-Petean \cite{[PP1]} have shown as well that the topological type of $(M, g)$ is restricted under the existence of a Riemannian metric of zero topological entropy. They have shown that in dimension four the only possible homeomorphism types are \begin{equation} \{S^4, \mathbb{CP}^2, S^2\times S^2, \mathbb{CP}^2\#\overline{\mathbb{CP}^2}, \mathbb{CP}^2\#\mathbb{CP}^2\}\end{equation} and in dimension five, the only possible diffeomorphism types are \begin{equation}\{S^5, S^3\times S^2, S^3\widetilde{\times} S^2, SU(3)/SO(3)\}. \end{equation}

The nontrivial $S^3$-bundle over $S^2$ is being denoted by $S^3\widetilde{\times} S^2$. The complex projective space is denoted $\mathbb{CP}^2$, and $\overline{\mathbb{CP}^2}$ is the underlying smooth manifold with the reverse orientation. We proceed to extend these results to orbit spaces of involutions on these manifolds by $\Z/2$, as follows.

\begin{theorem}{\label{Theorem 4D}} Every closed orientable smooth 4-manifold with fundamental group of order two is homeomorphic to a smooth manifold $M$, which admits an $\mathcal{F}$-structure. Consequently, \begin{equation} h(M) =  0 = Vol_K(M),\end{equation} and $M$ collapses with sectional curvature bounded from below. In particular, \begin{equation}\mathcal{Y}(M) \geq 0.\end{equation}

Suppose $M$ has finite nontrivial fundamental group. There exists a Riemannian metric $(M, g)$ such that \begin{equation}h_{top}(g) = 0\end{equation} if and only if $M$ is homeomorphic to a rational homology 4-sphere with fundamental group of order two.

In particular, both total spaces of the two orientable $S^2$-bundles over $\mathbb{RP}^2$ admit such a Riemannian metric of zero topological entropy.
\end{theorem}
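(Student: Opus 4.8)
The plan is to handle the three assertions in turn, taking the $\mathcal{F}$-structures from the constructions of Section~\ref{Section S} and the entropy input from the theorems of Paternain and of Paternain--Petean recalled in the Introduction. \emph{Existence of an $\mathcal{F}$-structure.} I would begin with an arbitrary closed orientable smooth $4$-manifold $N$ with $\pi_1(N)\cong\Z/2$. By Freedman's classification and the homeomorphism classification of closed oriented $4$-manifolds with fundamental group $\Z/2$ (Hambleton--Kreck), the type of $N$ is pinned down by its intersection form, its $w_2$-type and -- where these leave ambiguity -- its Kirby--Siebenmann invariant. The next step is to realize each such type by a smooth $M$ assembled from pieces each carrying an $\mathcal{F}$-structure: the orientable $S^2$-bundles over $\mathbb{RP}^2$, which inherit a $T^2$-action from the linear $T^2$-action on their double cover $S^2\times S^2$ that commutes with the deck involution; the blocks $\mathbb{CP}^2$, $\overline{\mathbb{CP}^2}$, $S^2\times S^2$; and elliptically fibred surfaces (the $K3$ surface, the Enriques surface, and surfaces obtained from them by fibre sums and logarithmic transformations), whose $\mathcal{F}$-structures come from the torus fibres, and which are needed to realize the even indefinite intersection forms -- such as $E_8\oplus H$ for the Enriques surface -- that cannot occur on a smooth simply connected $4$-manifold. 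One then glues these pieces by connected sum along balls carrying a standard linear circle action and by fibre sum along torus fibres; the point, which is the work of Section~\ref{Section S} extending Paternain--Petean, is that the $\mathcal{F}$-structures can be matched along the gluing loci, so that $M$ acquires a global $\mathcal{F}$-structure. Then $h(M)=0=Vol_K(M)$, collapse with sectional curvature bounded below, and (since $\dim M=4\geq 3$) $\mathcal{Y}(M)\geq 0$ all follow from \cite[Theorem A]{[PP1]} and \cite[Theorem 7.2]{[PP1]}.

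\emph{The entropy dichotomy, forward direction.} Suppose $M$ is closed, orientable, with finite nontrivial $\pi_1(M)$, and carries a metric $g$ with $h_{top}(g)=0$. As $\pi_1(M)$ is finite, the universal covering $\widetilde M\to M$ is a finite Riemannian cover; the induced finite covering $S\widetilde M\to SM$ of unit tangent bundles conjugates the geodesic flows, and topological entropy is a finite-cover invariant, so the lifted metric on the simply connected $4$-manifold $\widetilde M$ again has vanishing $h_{top}$. By \cite{[GPa1], [PP1]}, $\widetilde M$ is then homeomorphic to one of $S^4$, $\mathbb{CP}^2$, $S^2\times S^2$, $\mathbb{CP}^2\#\overline{\mathbb{CP}^2}$, $\mathbb{CP}^2\#\mathbb{CP}^2$. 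Orientability of $M$ makes the deck group act freely and orientation-preservingly, so $\chi(\widetilde M)=|\pi_1(M)|\cdot\chi(M)$ and $\sigma(\widetilde M)=|\pi_1(M)|\cdot\sigma(M)$; combined with $\chi(M)=2+b_2(M)\geq 2$ and $\sigma(M)=b_2^+(M)-b_2^-(M)$ this rules out $S^4$ and $\mathbb{CP}^2$ (by Euler characteristic) and $\mathbb{CP}^2\#\mathbb{CP}^2$ with either orientation (the quotient would have odd signature but $b_2=0$), and in the remaining cases forces $|\pi_1(M)|=2$ and $\chi(M)=2$, hence $b_2(M)=0$. Thus $M$ is a rational homology $4$-sphere with $\pi_1(M)\cong\Z/2$, with universal cover $S^2\times S^2$ or $\mathbb{CP}^2\#\overline{\mathbb{CP}^2}$.

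\emph{Sharpness and the bundles over $\mathbb{RP}^2$.} For the converse, the Hambleton--Kreck classification shows that a closed orientable smooth $4$-manifold with $\pi_1\cong\Z/2$ and $b_2=0$ is homeomorphic to one of the two total spaces of orientable $S^2$-bundles over $\mathbb{RP}^2$, so it suffices to equip these with metrics of zero topological entropy, which also proves the final assertion. I would use that any $S^2$-bundle over $\mathbb{RP}^2$ pulls back to the trivial bundle over $S^2$, since $H^2(\mathbb{RP}^2;\Z/2)\to H^2(S^2;\Z/2)$ vanishes; hence each of the two bundles is $(S^2\times S^2)/\tau$ for a free orientation-preserving involution $\tau$. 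Such involutions occur already among isometries of the round product metric $g_0$ -- for instance $(x,y)\mapsto(-x,-y)$ and $(x,y)\mapsto(-x,\rho y)$ with $\rho$ an orthogonal reflection of $S^2$, which are free, orientation-preserving, and whose quotients are the two bundles (distinguished by the second Stiefel--Whitney class of the associated $\mathbb{R}^3$-bundle over $\mathbb{RP}^2$). The metric $g_0$ has zero topological entropy by Ma\~n\'e's formula, because its geodesics run along flat totally geodesic $2$-tori and the number $n_T(p,q)$ of connecting geodesic arcs of length at most $T$ grows polynomially in $T$; being $\tau$-invariant it descends to $(S^2\times S^2)/\tau$, and the descended metric still has zero topological entropy since $S(S^2\times S^2)\to S\big((S^2\times S^2)/\tau\big)$ is a double cover conjugating the geodesic flows.

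The main obstacle is the first assertion: producing, in every one of the infinitely many homeomorphism types of closed orientable $4$-manifold with $\pi_1\cong\Z/2$, a smooth representative carrying a globally defined $\mathcal{F}$-structure -- this is precisely what the machinery of Section~\ref{Section S} is built to do, and care is needed where it meets the smooth realizability restrictions of Donaldson and Furuta on intersection forms. The entropy statements are comparatively soft, resting on the Paternain--Petean obstruction list, elementary bookkeeping with Euler characteristics and signatures of finite covers, and Ma\~n\'e's formula evaluated on a product of round $2$-spheres.
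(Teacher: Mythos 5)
Your proposal follows essentially the same route as the paper: realize each Hambleton--Kreck homeomorphism type by gluing building blocks carrying $\mathcal{F}$-structures (the paper uses the rational homology sphere $L_{\Z/2}$ of Proposition~\ref{Proposition C1} and the Enriques surface where you use the $S^2$-bundles over $\mathbb{RP}^2$), pass to the universal cover to invoke the Paternain--Petean list of simply connected $4$-manifolds with zero-entropy metrics, and exhibit the two orientable $S^2$-bundles over $\mathbb{RP}^2$ as free isometric quotients of the round $S^2\times S^2$. The remaining differences are cosmetic: you derive $b_2(M)=0$ by an explicit $\chi$, $\sigma$ count where the paper cites \cite[Theorem D]{[PP1]} and \cite[Theorem C]{[HK2]}, and you verify $h_{top}=0$ for the round product metric via Ma\~n\'e's formula where the paper combines Proposition~\ref{Proposition CROSS} with Lemma~\ref{Lemma PP}.
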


Paternain-Petean \cite{[PP3]} and Su\'arez-Serrato \cite{[P]} have studied the existence of the properties in the conclusion of Theorem \ref{Theorem 4D} for 4-manifolds that admit a model geometry in the sense of Thurston using the comprehensive descriptions of these geometric manifolds given by Hillman \cite{[H]}.\\

Regarding dimension five, we have the following. An orientable 5-manifold $M$ is of fibered type if and only if $\pi_2(M)$ is a trivial $\Z[\pi_1(M)]$-module. The orbit spaces of orientation-preserving $\Z/2$-involutions on the connected sums \begin{equation} S^5\#k_1(S^3\times S^2)\# k_2(S^3\widetilde{\times} S^2)\end{equation} for $k_1\geq 0, k_2\in \{0, 1\}$ that act trivially on the second homology group were classified by Hambleton-Su in \cite[Theorem 3.1]{[HS]}. The orbit spaces include four smooth manifolds that are homotopy equivalent to the real projective 5-space $\mathbb{RP}^5$, and that realize two homeomorphism types as involutions on $S^5$ \cite{[LMe], [HS]}.

\begin{theorem}{\label{Theorem 3}} Every closed smooth fibered 5-manifold with fundamental group of order two and torsion-free second homology $M$ admits an $\mathcal{F}$-structure, and consequently  \begin{equation} h(M) =  0 = Vol_K(M),\end{equation} and $M$ collapses with sectional curvature bounded from below. 

Moreover, there is a Riemannian metric of positive scalar curvature on $M$, i.e., the Yamabe invariant satisfies \begin{equation}\mathcal{Y}(M) > 0.\end{equation}
Suppose there exists a Riemannian metric $g$ on $M$ with \begin{equation}h_{top}(g) = 0.\end{equation} Then $b_2(M) \leq 1$. Up to homotopy equivalence, a metric of zero topological entropy exists if and only if $M$ is homotopy equivalent to $\mathbb{RP}^5$, an $S^2$-bundle over $\mathbb{RP}^3$ or the orbit space of $S^3\widetilde{\times} S^2$ by a $\Z/2$-involution.

 If furthermore $\omega_2(M) = 0$, then $M$ admits a Riemannian metric of zero topological entropy metric if and only if it is diffeomorphic to $S^2\times \mathbb{RP}^3$.
\end{theorem}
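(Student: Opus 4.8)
The plan is to establish Theorem~\ref{Theorem 3} in several independent pieces, matching the clauses of the statement. First I would construct the $\mathcal{F}$-structure: by Hambleton--Su's classification of orientation-preserving $\Z/2$-involutions on connected sums $S^5 \# k_1(S^3 \times S^2) \# k_2(S^3 \widetilde{\times} S^2)$ with trivial action on homology, together with the characterization of fibered type (i.e. $\pi_2$ trivial as a $\Z[\pi_1]$-module), every such $M$ is diffeomorphic to a manifold built as an iterated fiber sum / gluing of pieces of the form $S^2 \times (\text{3-manifold})$ and $S^2$-bundles over $\mathbb{RP}^3$, or has an explicit circle or torus action on an open dense set. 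On each such piece there is a locally free $S^1$-action coming from the $S^2 = S^1$-factor or from the Hopf-type fibration, and these patch to a polarized $\mathcal{F}$-structure in the sense of Cheeger--Gromov as recalled in Section~\ref{Section S}. Invoking \cite[Theorem A]{[PP1]} and \cite[Theorem 7.2]{[PP1]} then gives $h(M) = 0 = Vol_K(M)$ and collapse with sectional curvature bounded from below, hence $\mathcal{Y}(M) \geq 0$.

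For the positive scalar curvature assertion I would upgrade $\mathcal{Y}(M) \geq 0$ to $\mathcal{Y}(M) > 0$ directly: since $M$ has finite fundamental group and is spin or has a definite $\omega_2$-type, one applies Gromov--Lawson--Schoen--Yau surgery stability of psc metrics. All the model pieces ($S^2 \times S^3$, $S^3 \widetilde\times S^2$, $S^2$-bundles over $\mathbb{RP}^3$, $S^2 \times \mathbb{RP}^3$, and $\mathbb{RP}^5$) carry psc metrics (products, bundle constructions, space-form quotients), and the fibered-type gluings are codimension $\geq 3$ surgeries, which preserve the existence of a psc metric. So $M$ admits a psc metric and $\mathcal{Y}(M) > 0$.

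The topological-entropy clauses are the heart of the matter. For the ``only if'' direction I would argue that a metric of zero topological entropy forces strong restrictions, using Paternain--Petean's dimension-five list $\{S^5, S^3\times S^2, S^3\widetilde\times S^2, SU(3)/SO(3)\}$ for the simply connected (universal cover) case together with the behaviour of $h_{top}$ under finite covers: if $g$ has zero topological entropy then so does its pullback to the double cover $\widetilde M$, which is one of those four manifolds; combining this with the fibered-type hypothesis and with Ma\~n\'e's formula applied to the $\Z/2$-quotient rules out large $b_2$ and yields $b_2(M) \le 1$. Then a case analysis by $\pi_1 = \Z/2$ actions on those four total spaces, using the Hambleton--Su classification, leaves exactly $\mathbb{RP}^5$, the $S^2$-bundles over $\mathbb{RP}^3$, and the $\Z/2$-quotient of $S^3\widetilde\times S^2$ as the homotopy types that can support such a metric; conversely each of these is (homotopy equivalent to) a manifold admitting a metric of zero topological entropy, e.g. an isometric quotient of the homogeneous or bundle metrics of zero entropy already known on the covers. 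Under the extra hypothesis $\omega_2(M) = 0$, the $S^2$-bundle over $\mathbb{RP}^3$ is forced to be the trivial bundle and the $S^3\widetilde\times S^2$-quotient is excluded on characteristic-class grounds, pinning the diffeomorphism type down to $S^2 \times \mathbb{RP}^3$.

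The main obstacle I anticipate is the sharpness of the entropy classification: showing that a zero-topological-entropy metric genuinely \emph{cannot} exist on the remaining fibered 5-manifolds with $b_2 \ge 2$ or with the ``wrong'' $\pi_1$-action, rather than merely that the simply connected models are constrained. This requires pushing the Paternain and Paternain--Petean arguments (via Ma\~n\'e's integral-geometric formula and the growth of $n_T(p,q)$, and via the Gromov--Yomdin type estimates linking entropy to the topology of loop spaces) through the $\Z/2$-quotient, and carefully matching the resulting list against the Hambleton--Su classification so that the homotopy-equivalence statement, and then the $\omega_2 = 0$ diffeomorphism statement, come out exactly as claimed. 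The psc and $\mathcal{F}$-structure parts I expect to be comparatively routine given Section~\ref{Section S} and the cited surgery results.
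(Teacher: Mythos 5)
Your proposal follows essentially the same route as the paper: the Hambleton--Su classification realizes $M$ as a circle sum of blocks carrying circle actions (giving the $\mathcal{T}$-structure via Proposition \ref{Proposition S1} and Corollary \ref{Corollary Fstr}), Gromov--Lawson/Schoen--Yau surgery in codimension four gives the psc metric, and the entropy clauses follow by pulling the metric back to the double cover, invoking Paternain--Petean's five-dimensional list (with the Wu manifold excluded by the torsion-free $H_2$ hypothesis rather than the fibered-type one), and taking isometric quotients of zero-entropy metrics for the converse. The only superfluous elements are the appeal to Ma\~n\'e's formula for the bound $b_2(M)\leq 1$, which follows directly from $b_2(M)\leq b_2(\widetilde{M})\leq 1$ once the universal cover is pinned down, and the claim that the resulting structure is polarized (the circle action on the $S^2$-factor of the neck has fixed points); neither affects the argument.
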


The cut-and-paste constructions of Gromov-Lawson \cite[Theorem A]{[GL]} and Schoen-Yau \cite[Corollary 4]{[SY]} imply the claim in Theorem \ref{Theorem 3} regarding the scalar curvature, once the structure results of Hambleton-Su are invoked \cite[Theorem 3.7]{[HS]}. While the classification of simply connected manifolds of dimension greater or equal to five that admit a metric of positive scalar curvature has been obtained by Stolz \cite{[St]}, the existence question for manifolds with finite fundamental group as been studied extensively, and it is not yet settled (see Botvinnik-Rosenberg \cite{[BR2]}). Theorem \ref{Theorem 3} is a contribution in this direction, and in particular it answers in the affirmative the propagation question of Kwasik-Schultz \cite{[KS]} for fibered 5-manifolds with fundamental group of order two.\\

In dimension six, using the classification of Smale \cite{[Sm]} of 2-connected 6-manifolds and the classification of Hambleton \cite{[IH]} of free differentiable orientation-preserving involutions on a connected sum $k(S^3\times S^3)$, we obtain the following result.

\begin{theorem}{\label{Theorem 6D}} Every closed orientable smooth 6-manifold with trivial second homotopy group and fundamental group of order at most two is homotopy equivalent to a smooth manifold $M$, which admits an $\mathcal{F}$-structure. Consequently, \begin{equation}  h(M) =  0 = Vol_K(M),\end{equation} and $M$ collapses with sectional curvature bounded from below.

Moreover, $M$ admits a Riemannian metric of positive scalar curvature, i.e., \begin{equation}\mathcal{Y}(M) >  0.\end{equation}

Finally, there exists a Riemannian metric $(M, g)$ such that \begin{equation}h_{top}(g) = 0\end{equation} if and only if $M$ is homotopy equivalent to $S^6, S^3\times S^3$ or to an $S^3$-bundle over $\mathbb{RP}^3$. For the latter two cases, $Min Vol(M) = 0$ and $M$ collapses with bounded sectional curvature.
\end{theorem}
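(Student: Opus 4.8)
The plan is to reduce to a short list of model manifolds via the classifications of Smale and Hambleton, to equip each model with the structures demanded by the statement, and to settle the topological-entropy equivalence through a rational-homotopy argument. First, since $\pi_2(M)=0$ and $|\pi_1(M)|\le 2$, the universal cover $\widetilde M$ is a closed $2$-connected smooth $6$-manifold, so by Smale's classification \cite{[Sm]} it is diffeomorphic to $S^6$ or to $\#_k(S^3\times S^3)$ with $k\ge 1$. If $\pi_1(M)=\Z/2$ then $M$ is the orbit space of a free orientation-preserving smooth involution on such a connected sum, and I would invoke Hambleton's classification \cite{[IH]} of those involutions to list the possible orbit spaces: the facts I would extract are that the orbit spaces are assembled from the blocks $S^3\times S^3$ and $S^3$-bundles over $\mathbb{RP}^3$, that for $k\le 1$ the orbit space is an $S^3$-bundle over $\mathbb{RP}^3$, and that a free involution on $S^6$ is orientation-reversing (by a Lefschetz-number computation) so that no orientable quotient of $S^6$ occurs. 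A further point to record is that both $S^3$-bundles over $\mathbb{RP}^3$ occur as $(S^3\times S^3)/\langle\sigma\rangle$ for a \emph{linear} free orientation-preserving involution $\sigma\in O(4)\times O(4)$ --- one may take $\sigma=(-\id,-\id)$ for the trivial bundle and $\sigma=(-\id,\,\mathrm{diag}(1,1,-1,-1))$ for the nontrivial one --- which makes the geometry of these quotients transparent.

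Next I would produce the $\mathcal{F}$-structures and the scalar-curvature metrics on each model. On $S^3\times S^3$ take the free $T^2$-action given by the product of the two Hopf circles; on $\#_k(S^3\times S^3)$ and on the connected-sum orbit spaces use the construction of $\mathcal{F}$-structures on connected sums carried out in Section \ref{Section S}; and on each $S^3$-bundle over $\mathbb{RP}^3$ use the free fibrewise Hopf $S^1$-action coming from its description as the unit sphere bundle of a complex rank-$2$ bundle, equivalently the $\sigma$-invariant Hopf circle pushed down from $S^3\times S^3$. With an $\mathcal{F}$-structure in hand, \cite[Theorem A and Theorem 7.2]{[PP1]} give $h(M)=0=Vol_K(M)$, collapse with sectional curvature bounded from below, and $\mathcal{Y}(M)\ge 0$. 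For a positive-scalar-curvature metric I would use that $S^6$ and $S^3\times S^3$ carry products of round metrics, that $\#_k(S^3\times S^3)$ inherits one by the surgery theorems of Gromov--Lawson \cite{[GL]} and Schoen--Yau \cite{[SY]}, and that the $S^3$-bundles over $\mathbb{RP}^3$ inherit one because the round product metric on $S^3\times S^3$ has positive scalar curvature and $\sigma$ acts on it by isometries; hence $\mathcal{Y}(M)>0$.

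For the topological-entropy equivalence, in the ``if'' direction the round metric on $S^6$ and the product of round metrics on $S^3\times S^3$ have vanishing topological entropy, and pushing the latter down along the finite Riemannian covering $(S^3\times S^3)\to(S^3\times S^3)/\langle\sigma\rangle$ --- topological entropy being unchanged by finite coverings --- yields metrics of vanishing topological entropy on the $S^3$-bundles over $\mathbb{RP}^3$; moreover the free $T^2$- and $S^1$-actions present on these last two families give $MinVol(M)=0$ and collapse with two-sided bounded curvature by Cheeger--Gromov \cite{[CG]}, which is the last assertion. In the ``only if'' direction, a metric $g$ on $M$ with $h_{top}(g)=0$ lifts to a metric of vanishing topological entropy on the finite cover $\widetilde M$; by the work of Gromov and Paternain \cite{[GPa1], [PP1]} this forces the rational loop-space homology of $\widetilde M$ to grow subexponentially, hence $\widetilde M$ is rationally elliptic by the rational-homotopy dichotomy, hence $\chi(\widetilde M)\ge 0$ by Halperin's theorem; since $\chi(\#_k(S^3\times S^3))=2-2k$ this forces $k\le 1$, so $\widetilde M$ is $S^6$ or $S^3\times S^3$ and, by the reduction, $M$ is $S^6$, $S^3\times S^3$, or a manifold homotopy equivalent to an $S^3$-bundle over $\mathbb{RP}^3$.

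The step I expect to require the most care is the interface with the classification literature rather than any single estimate. Because topological entropy is a metric invariant and not a homotopy invariant, the ``if'' direction needs to know that a manifold in this family which is homotopy equivalent to an $S^3$-bundle over $\mathbb{RP}^3$ is in fact diffeomorphic to one of the linear quotients $(S^3\times S^3)/\langle\sigma\rangle$; establishing this --- together with the precise enumeration of orbit spaces of free involutions on $\#_k(S^3\times S^3)$ --- is what must be drawn carefully from Hambleton \cite{[IH]}. The analytic ingredients, namely the collapse theorems of Cheeger--Gromov and Paternain--Petean, the surgery theorems for scalar curvature, and the loop-space criterion for vanishing entropy, can all be quoted.
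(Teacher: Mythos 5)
Your proposal is correct and follows essentially the same route as the paper: Smale plus Hambleton's classification to reduce to $S^6$, $k(S^3\times S^3)$, and connected sums involving the two $S^3$-bundles over $\mathbb{RP}^3$; circle actions and the connected-sum constructions for the $\mathcal{F}$-structures and the $MinVol$ statement; Gromov--Lawson/Schoen--Yau for positive scalar curvature; round, product, and quotient (resp.\ bi-invariant on $SO(4)\cong S^3\times\mathbb{RP}^3$, in the paper) metrics for the zero-entropy direction; and the rational-ellipticity dichotomy with Gromov's exponential-growth theorem for the converse. The only deviation is cosmetic: you rule out $k\geq 2$ via Halperin's inequality $\chi\geq 0$ for rationally elliptic spaces, whereas the paper uses the Friedlander--Halperin bound on rational homotopy groups to get $b_3<4$; both yield $k\leq 1$.
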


The minimal volume is defined \begin{equation} MinVol(M):= \underset{g}{\inf}\{Vol(M, g) | |K_g|\leq 1\}.\end{equation}  Cheeger-Gromov \cite{[CG]} have proven that in the presence of a polarized $\mathcal{F}$-structure, this minimal volume vanishes and $M$ collapses with bounded sectional curvature. Since $Min Vol(M) = 0$ implies that the Euler characteristic of $M$ is zero, the last implication of the theorem is optimal. Bessi\`eres \cite{[B]} and Kotschick \cite{[K]} have shown the dependance of the minimal volume on the smooth structure. Kotschick's examples include pairs of homeomorphic yet non-diffeomorphic manifolds for which a smooth structure collapses with bounded sectional curvature, while the other does not.\\

We now switch gears, and consider involutions with inequivalent smooth structures as in the following definition.

\begin{definition}{\label{Definition Inv}} Let $A_G$ and $B_G$ be closed smooth n-manifolds with fundamental group $\pi_1(A_G) = G = \pi_1(B_G)$, and consider the involutions
\begin{equation}C\overset{\varphi_{A_G}}\longrightarrow A_G\end{equation} and \begin{equation}C\overset{\varphi_{B_G}}\longrightarrow B_G,\end{equation} where $C$ is the universal cover. The involution \emph{$\varphi_{A_G}$ is an exotic copy of $\varphi_{B_G}$} if there exists a $G$-equivariant homeomorphism $A_G\rightarrow B_G$, but no such diffeomorphism; in particular, the orbit space $B_G$ has inequivalent smooth structures. If such an exotic involution exists, we say that $C$ admits an \emph{exotic $G$-involution}. If the orbit space $B_G$ admits infinitely many inequivalent smooth structures, we say that $C$ admits an infinite set of exotic free $G$-involutions.

\end{definition}

Building upon recent progress on unveiling exotic smooth structures on 4-manifolds \cite{[FPS], [ABBKP], [BK], [AP1]}, we obtained the following result.

\begin{theorem}{\label{Theorem 1}} Let $G$ be a nontrivial finite group that acts freely on $S^3$. There is an infinite set of exotic free $G$-involutions on the connected sums \begin{equation} (da + 1)\mathbb{CP}^2 \# (db + 1)\overline{\mathbb{CP}^2}\# (d - 1)(S^2\times S^2),\end{equation}
where $d = |G|$, $a$ is an odd integer with , and $a, b\in \Z$ satisfy the numerical relations 2a = 0 mod 4, and\begin{equation} 5a - b + 4 \geq 0, a\leq b - 1.\end{equation}

There is an infinite set of exotic free $G$-involutions on the connected sums \begin{equation} dqE(2)\#(dq + 2dk - 1)(S^2\times S^2),\end{equation}
where $q\in \N$ and $k\in \N\cup \{0\}$. 
\end{theorem}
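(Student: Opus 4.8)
The plan is to realize each of the listed connected sums as the homeomorphism type of the universal cover of a smooth closed $4$-manifold carrying a free $G$-action, and then to deform that action through Fintushel-Stern knot surgery into an infinite family whose quotients are pairwise non-diffeomorphic yet $G$-equivariantly homeomorphic; by Definition \ref{Definition Inv} this is exactly an infinite set of exotic free $G$-involutions. First I would produce a \emph{seed} action. Starting from the free linear action of $G\subset S^3=SU(2)$ on $\mathbb{R}^4=\mathbb{H}$, I would build a closed smooth $4$-manifold $B_G$ with $\pi_1(B_G)=G$ (for instance by surgery on a loop in $(S^3/G)\times S^1$ to kill the extra $\mathbb{Z}$), and then raise its second Betti number by performing $G$-equivariant connected and fiber sums along free orbits, using copies of $E(2)$ and of $S^2\times S^2$ (respectively of $\mathbb{CP}^2$ and $\overline{\mathbb{CP}^2}$) as building blocks. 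The coefficients $dq$, $dq+2dk-1$ (respectively $da+1$, $db+1$, $d-1$) are chosen precisely so that $\chi$ and the signature of the total space are divisible by $d=|G|$ and so that the equivariant intersection form and second Stiefel-Whitney class match those forced by such a construction; this is what makes the free action exist and pins the homeomorphism type of the universal cover to the stated connected sum, via Freedman's theorem together with the classification of closed $4$-manifolds with finite fundamental group.

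The decisive feature I would engineer into the seed is that the \emph{quotient} $B_G$ carries a nonvanishing Seiberg-Witten invariant, even though the listed connected sum is Seiberg-Witten trivial. This is possible because the smooth universal cover of $B_G$ is only homeomorphic, not diffeomorphic, to that connected sum: by manufacturing $B_G$ out of $E(2)$-fiber sums in the manner of Szab\'o \cite{[Sz]}, or from the minimal symplectic pieces underlying the exotic manifolds of \cite{[FPS], [ABBKP], [BK], [AP1]}, the seed retains Seiberg-Witten basic classes while its cover is an exotic copy of the connected sum. Inside $B_G$ I would then locate a homologically essential, c-embedded torus $T$ of square zero lying in a simply connected region and pairing nontrivially with the basic classes, arranged so that $T$ lifts to a single free orbit $\{\widetilde{T}_1,\dots,\widetilde{T}_d\}$ of disjoint tori in the cover.

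Next I would run Fintushel-Stern knot surgery \cite{[FS]} along $T$ with an infinite family of knots $K\subset S^3$ having pairwise distinct Alexander polynomials. Because the surgery is supported in $T\times D^2$ inside a simply connected region, it preserves $\pi_1=G$, preserves the homeomorphism type (it leaves the intersection form and $w_2$ unchanged, so Freedman and the finite-$\pi_1$ classification apply), and lifts to the $G$-equivariant surgery performed simultaneously on the orbit $\{\widetilde{T}_i\}$; this defines a free $G$-action $\varphi_K$ with quotient $B_{G,K}$ and with universal cover again homeomorphic to the listed connected sum. The knot surgery formula gives $SW_{B_{G,K}}=SW_{B_G}\cdot\Delta_K(t)$ up to the standard variable substitution, so knots with distinct Alexander polynomials yield pairwise non-diffeomorphic quotients. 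Since a $G$-equivariant diffeomorphism of universal covers would descend to a diffeomorphism of the quotients, the actions $\{\varphi_K\}$ are pairwise smoothly inequivalent while all being topologically equivalent, producing the asserted infinite sets of exotic free $G$-involutions on both families.

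The main obstacle is the interplay between the finite quotient and the gauge theory. I must guarantee both that the seed quotient genuinely satisfies $SW_{B_G}\neq 0$ and that the knot-surgery multiplicativity descends correctly to $B_{G,K}$ in the presence of the free $G$-action. The delicate points are (i) producing a c-embedded square-zero torus that is simultaneously part of a disjoint free $G$-orbit and nontrivially linked to the basic classes of the quotient, and (ii) controlling the quotient Seiberg-Witten invariant so that distinct Alexander polynomials are actually detected downstairs rather than washed out by the covering. The remaining bookkeeping, namely checking that the equivariant fiber-sum/connected-sum construction realizes exactly the stated intersection form and parity (spin versus non-spin) so that the universal cover has the prescribed homeomorphism type, is controlled by Freedman's classification together with the numerical relations imposed on $a,b,q,k$.
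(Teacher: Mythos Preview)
Your plan misses the decisive step, and as written the argument does not prove the theorem. By Definition~\ref{Definition Inv}, an infinite set of exotic free $G$-involutions on $C$ means an infinite family of free smooth $G$-actions on \emph{one and the same smooth manifold} $C$ whose quotients are homeomorphic but pairwise non-diffeomorphic. You produce quotients $B_{G,K}$ and verify only that their universal covers are \emph{homeomorphic} to the listed connected sum. You never show they are all \emph{diffeomorphic} to one another, let alone to the standard connected sum. In fact, your own construction makes this unlikely: lifting knot surgery on $T\subset B_G$ performs $d$ simultaneous knot surgeries on the orbit $\{\widetilde T_1,\dots,\widetilde T_d\}$ upstairs, and if those tori interact nontrivially with basic classes of the cover (as you explicitly arrange downstairs), the smooth structure of the cover will change with $K$. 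Your final sentence, that ``a $G$-equivariant diffeomorphism of universal covers would descend,'' is correct but beside the point; the issue is not ruling out an equivariant diffeomorphism between possibly different covers, it is placing all the actions on a single $C$.

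The paper's proof resolves exactly this point, and by a quite different mechanism. One takes the quotients to be $X_n\# L_G$, where $\{X_n\}$ is an already-known infinite family of simply connected homeomorphic $4$-manifolds with pairwise distinct Seiberg--Witten invariants (from \cite{[FPS],[ABBKP],[BK],[AP1],[PS]}), and $L_G$ is a fixed rational homology $4$-sphere with $\pi_1=G$ and $\widetilde{L_G}\cong (d-1)(S^2\times S^2)$. Kotschick--Morgan--Taubes \cite{[KMT]} keeps the $X_n\# L_G$ pairwise non-diffeomorphic. The universal cover is then $d\,X_n\#(d-1)(S^2\times S^2)$, and the crucial input you are missing is the stabilization theorem of Baykur--Sunukjian \cite{[BS]}: $X_i\#(S^2\times S^2)\cong X_j\#(S^2\times S^2)$ for all $i,j$. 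Since $d\geq 2$ guarantees at least one $S^2\times S^2$ summand upstairs, all the covers become diffeomorphic to the standard connected sum, which is the manifold $C$ of the statement. No equivariant knot surgery, no c-embedded torus in a free orbit, and no analysis of Seiberg--Witten invariants on the cover are needed. The ``main obstacle'' you flag (controlling SW on the quotient under knot surgery) is not the real difficulty; the real difficulty is fixing the smooth type of the cover, and it is handled by stabilization rather than by gauge theory.
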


The contribution of Theorem \ref{Theorem 1} is a considerable increase on the number of 4-manifolds known to admit such involutions, since involutions of this nature were known to exist. Examples of involutions of this kind were presented by Ue \cite{[Ue]}, and Hanke-Kotschick-Wehrheim \cite{[HKW]}; LeBrun produced examples for $G = \Z/2$  \cite{[L3]}. Infinite families of orientation-preserving exotic $\Z/d$-involutions were given by Suvaina \cite{[S]} and Fintushel-Stern-Sunukjian \cite[Theorem 1]{[FSN]}. Sung constructed exotic $\Z/2\oplus G$-involutions in \cite{[CS]}. Unlike the group actions involved in Theorem \ref{Theorem 1}, the actions in \cite{[CS]} need not be free. A key ingredient in the proof of Theorem \ref{Theorem 1} is the recent work of Baykur-Sunukjan, which shows that the inequivalent smooth structures on simply connected manifolds under consideration become diffeomorphic after taking a connected sum with $S^2\times S^2$ \cite{[BS]}.\\

The aforementioned results of Gromov-Lawson \cite{[GL]}, and Schoen-Yau \cite{[SY]}, along with results of Witten, Taubes, and Kotschick-Morgan-Taubes on Seiberg-Witten theory \cite{[W], [Ta], [KMT]} allow us to conclude the following geometric properties of these involutions.

\begin{theorem}{\label{Theorem 2}} The orbit spaces of the infinite set of exotic $G$-involutions on \begin{equation} (da + 1)\mathbb{CP}^2 \# (db + 1)\overline{\mathbb{CP}^2}\# (d - 1)(S^2\times S^2),\end{equation} of Theorem \ref{Theorem 1} do not admit a metric of positive scalar curvature, while their universal covers do. In particular, the Yamabe invariant changes sign as one passes from the orbit space to the universal cover.

There are infinitely many inequivalent smooth structures on the homeomorphism type of \begin{equation} (da + 1)\mathbb{CP}^2 \# (db + 1)\overline{\mathbb{CP}^2}\# (d - 1)(S^2\times S^2),\end{equation} and on their orbit $G$-spaces such that neither admit a metric of positive scalar curvature for $G\in \{\Z/p, \Z/p\oplus \Z/q\}$.

\end{theorem}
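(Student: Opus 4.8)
The plan is to pit the positive-scalar-curvature surgery theorem of Gromov--Lawson \cite{[GL]} and Schoen--Yau \cite{[SY]} against the Seiberg--Witten obstruction to positive scalar curvature, in the spirit of LeBrun's examples \cite{[L3]}, applied to the family supplied by Theorem~\ref{Theorem 1}. Write $W := (da+1)\mathbb{CP}^2 \# (db+1)\overline{\mathbb{CP}^2} \# (d-1)(S^2\times S^2)$; for the involutions of Theorem~\ref{Theorem 1} this standard connected sum is the universal cover of each orbit space $B$ in question. First I would dispose of the covers: each of $\mathbb{CP}^2$, $\overline{\mathbb{CP}^2}$ and $S^2\times S^2$ admits a metric of positive scalar curvature (the Fubini--Study metric has positive Ricci curvature, and $S^2\times S^2$ carries a product of round metrics), and positive scalar curvature is preserved under connected sum in dimension $\geq 3$ by \cite{[GL]}, \cite{[SY]}; hence $W$ admits such a metric and $\mathcal{Y}(W) > 0$. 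This is also the positive-scalar-curvature statement for the covers appearing in the second paragraph.

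The content is therefore that the orbit spaces $B$ carry no metric of positive scalar curvature, which I would deduce from a non-vanishing Seiberg--Witten invariant of $B$. In the construction underlying Theorem~\ref{Theorem 1}, $B$ is built, over a base with fundamental group $G$, from pieces produced by generalized fibre sums along tori, logarithmic transforms, and Fintushel--Stern knot surgery \cite{[FS]}, as in Szab\'o \cite{[Sz]}; the basic classes of the symplectic building blocks are detected by Taubes' non-vanishing theorem \cite{[Ta]}, and they survive the gluings by the product formula of Kotschick--Morgan--Taubes \cite{[KMT]} and by the knot-surgery formula. Since $b_1(B) = 0$ and the numerical hypotheses of Theorem~\ref{Theorem 1} are arranged so that $b_2^+(B) \geq 2$, this Seiberg--Witten invariant is an unambiguous diffeomorphism invariant, free of wall-crossing corrections. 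The Weitzenb\"ock--Lichnerowicz argument of Witten \cite{[W]} then shows that a metric of positive scalar curvature on $B$ would force every monopole, hence every Seiberg--Witten invariant of $B$, to vanish; as it does not, $B$ has no such metric and $\mathcal{Y}(B) \leq 0$. Combined with $\mathcal{Y}(W) > 0$, the Yamabe invariant changes sign as one passes from the orbit space $B$ to its universal cover $W$. (If one wants $\mathcal{Y}(B) < 0$ on the nose, LeBrun's curvature estimate applies once a monopole class of positive square on $B$ has been exhibited.)

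The second paragraph is then a matter of counting. The groups $\Z/p$ and $\Z/p\oplus\Z/q$ act freely on $S^3$, so Theorem~\ref{Theorem 1} applies and yields infinitely many pairwise inequivalent smooth structures on the orbit $G$-space; and the simply connected symplectic building blocks entering the construction are homeomorphic to $W$ and carry non-vanishing Seiberg--Witten invariants, furnishing infinitely many inequivalent smooth structures on the homeomorphism type of $W$ itself. Running the obstruction of the previous paragraph once in the simply connected setting (ordinary Seiberg--Witten theory) and once in the finite-$\pi_1$ setting shows that none of these smooth structures admits a metric of positive scalar curvature.

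The crux is the Seiberg--Witten non-vanishing for the orbit spaces $B$. These have nontrivial finite fundamental group and, their universal covers being connected sums, they are not symplectic, so one cannot invoke Taubes' theorem for $B$ directly. One must instead track the basic classes through the \emph{equivariant} cut-and-paste and confirm that the Kotschick--Morgan--Taubes and Fintushel--Stern gluing formulas stay valid --- or can be arranged to stay valid --- over a manifold with finite cyclic, respectively bicyclic, fundamental group; this is precisely where the structural input behind Theorem~\ref{Theorem 1}, together with the Baykur--Sunukjian stabilization \cite{[BS]}, is needed. Checking that $b_2^+(B) \geq 2$, and that the surgeries can be localized away from loops carrying the fundamental group, are the routine but unavoidable verifications.
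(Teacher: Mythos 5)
Your overall strategy --- the Gromov--Lawson/Schoen--Yau surgery theorem for the standard connected sum versus a Seiberg--Witten obstruction for the exotic orbit spaces --- is the paper's strategy, and your treatment of the universal covers is fine. But you have misread how the orbit spaces are built, and as a consequence the step you yourself flag as the crux is left unproved. In the paper the orbit space is not produced by an equivariant cut-and-paste over a base with fundamental group $G$: it is literally the connected sum $X_n\#L_G$, where $X_n$ is one of the simply connected exotic $4$-manifolds with $SW_{X_n}\neq 0$ and $L_G$ is the rational homology $4$-sphere with $\pi_1=G$ of Ue and Hanke--Kotschick--Wehrheim \cite{[Ue],[HKW]}. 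The Seiberg--Witten non-vanishing of the orbit space is then an immediate application of \cite[Proposition 2]{[KMT]}, which says connected sum with a rational homology sphere preserves non-triviality of the invariant, so $SW_{X_n\#L_G}\neq 0$ and Witten's argument \cite{[W]} excludes positive scalar curvature. No equivariant gluing formula, no tracking of basic classes through fibre sums over a non-simply-connected base, and no ``localization of surgeries away from loops carrying $\pi_1$'' is required. Your proposal instead defers exactly this point to verifying that the Kotschick--Morgan--Taubes and Fintushel--Stern gluing formulas ``stay valid'' over a manifold with finite cyclic or bicyclic fundamental group, and calls that routine; no such equivariant formula is available in the cited literature, and this is a genuine gap in your argument rather than a bookkeeping step.

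Your second paragraph is also off target. The infinitely many smooth structures with $\pi_1\in\{\Z/p,\Z/p\oplus\Z/q\}$ that the paper uses are \emph{symplectic} manifolds constructed in \cite{[BK],[AP1],[T2]}; their universal covers are symplectic by pulling back the symplectic form, and Taubes' theorem \cite{[Ta]} gives $SW\neq 0$ simultaneously on the quotient and on the cover, ruling out positive scalar curvature at both levels at once. The simply connected ``symplectic building blocks'' you invoke are not homeomorphic to $W$ (their Betti numbers differ from those of $(da+1)\mathbb{CP}^2\#(db+1)\overline{\mathbb{CP}^2}\#(d-1)(S^2\times S^2)$), so they cannot by themselves furnish the exotic structures on the homeomorphism type of $W$ that the statement requires; it is the universal covers of the non-simply-connected symplectic examples that do.
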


The standard smooth structures on the manifolds of Theorem \ref{Theorem 2} and on their orbit $G$-spaces as connected sums have Riemannian metrics of positive scalar curvature, and their Yamabe invariants are positive. The phenomena described in Theorem \ref{Theorem 2} was previously known to exist, and the result provides infinite sets of simply connected and nonsimply connected examples that answer in the negative the propagation question of Kwasik-Schultz \cite{[KS]} that was mentioned before. LeBrun displayed a smooth structure on a 4-manifold with fundamental group $\Z/2$ that has negative Yamabe invariant, while its universal cover has positive Yamabe invariant \cite[Theorem 1]{[L3]}.  In \cite[Theorem 9]{[HKW]}, Hanke-Kotschick-Wehrheim showed the existence of 4-manifolds that have $G$ as in the hypothesis of Theorem \ref{Theorem 1} as fundamental group, which do not admit a Riemannian metric of positive scalar curvature although their universal covers do admit such a metric. These were the first counterexamples to a conjecture of Rosenberg, which states that a closed manifold with finite fundamental group of order admits a metric of positive curvature if and only if so does its universal cover \cite{[Ro]}. Theorem \ref{Theorem 2} extends these results in the sense that it includes infinitely many different diffeomorphism types of orbit spaces, and increases the number of homeomorphism types of the universal covers. Moreover, LeBrun has shown that the Yamabe invariant of a surface of general type is negative \cite{[L2]}. The examples of negative Yamabe invariant of Theorem \ref{Theorem 2} are symplectic, yet they need not be K\"ahler.

\subsection{Acknowledgements} We thank Gabriel Paternain, Jimmy Petean, Yang Su, and Pablo Su\'arez-Serrato for useful conversations and e-mail correspondence. The Simons Foundation is gratefully acknowledged for its support. We thank CIMAT for its hospitality during the production of part of this manuscript.

\section{$\mathcal{F}$-structures, circle sums, and topological entropy of submersions}

\subsection{Constructions of $\mathcal{F}$-structures}{\label{Section S}} In this section, we study constructions of $\mathcal{F}$-structures. In particular, address a question of Paternain-Petean \cite[Question 2]{[PP2]}, and extend work of Botvinnik-Rosenberg \cite{[BR1]}. We recall the notion of an $\mathcal{F}$-structure, which was introduced by Cheeger-Gromov \cite{[G], [CG]} expressed in terms of sheaves. We work with the equivalent definition given in \cite[Section 2]{[PP2]} cf. \cite[Definition 5.3]{[PP1]}.

\begin{definition}{\label{Definition F}}An \emph{$\mathcal{F}$-structure} on a smooth closed manifold $M$ is given by 
\begin{itemize}
\item a finite open cover $\{U_1, \ldots, U_N\}$ of $M$;
\item a finite Galois covering $\pi_i: \widetilde{U_i}\rightarrow U_i$ with $\Gamma_i$ a group of deck transformations for $1\leq i \leq N$;
\item a smooth effective torus action with finite kernel of a $k_i$-dimensional torus 
\begin{equation}
\phi_i: T^{k_i}\rightarrow Diff(\widetilde{U_i})
\end{equation}
for $1\leq i \leq N$;
\item a representation $\Phi_i: \Gamma_i \rightarrow Aut(T^{k_i})$ such that
\begin{equation}
\gamma(\phi_i(t)(x)) = \phi_i(\Gamma_i(\gamma)(t))(\gamma x)
\end{equation}
for all $\gamma \in \Gamma_i$, $t\in T^{k_i}$, and $x\in \widetilde{U}_i$;
\item for any subcollection $\{U_{i_1}, \ldots, U_{i_l}\}$ that satisfies $U_{i_1\cdots i_l}:= U_{i_1} \cap \cdots \cap U_{i_l} \neq \emptyset$, the following compatibility condition holds: let $\widetilde{U}_{i_1\cdots i_l}$ be the set of all points $(x_{i_1}, \ldots, x_{i_l}) \in \widetilde{U}_{i_1}\times \cdots \times \widetilde{U}_{i_l}$ such that $\pi_{i_1}(x_{i_1}) = \cdots = \pi_{i_l}(x_{i_l})$. The set $\widetilde{U}_{i_1\cdots i_l}$ covers $\pi^{-1}(U_{i_1\cdots i_l}) \subset \widetilde{U}_{i_1\cdots i_l}$ for all $1\leq j \leq l$. It is required that $\phi_{i_j}$ leaves $\pi^{-1}_{i_j}(U_{i_1\cdots i_l})$ invariant, and it lifts to an action on $\widetilde{U}_{i_1\cdots i_l}$ such that all lifted actions commute.
\item An $\mathcal{F}$-structure is called a \emph{$\mathcal{T}$-structure} if the Galois coverings $\pi_i: \widetilde{U}_i \rightarrow U_i$ in Definition \ref{Definition F} can be taken to be trivial for every $i$.

\end{itemize}
\end{definition}

Paternain-Petean \cite[Theorem 5.10]{[PP1]} showed that every elliptic surface admits a $\mathcal{T}$-structure. We first observe the existence of these structures on inequivalent smooth structures on the homeomorphism type of elliptic surfaces $E(n)$ built using cut-and-paste constructions of fiber sum \cite{[Go1]}, torus surgeries \cite[Section ]{[FPS]}, and Fintushel-Stern's Knot surgery \cite{[FS]}. The following proposition extends the result of Paternain-Petean to noncomplex and nonsymplectic examples.

\begin{proposition}{\label{Proposition Knot}} For any knot $K \subset S^3$ the homotopy elliptic surface $E(n)_K$ obtained from Knot surgery admits a $\mathcal{T}$-structure.

Let $E(n)_s$ be a homotopy elliptic surface obtained by applying torus surgeries to a fiber sum of $E(n)$ and the 4-torus. Then $E(n)_s$ admits a $\mathcal{T}$-structure.
\end{proposition}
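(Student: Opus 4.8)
The plan is to exhibit each homotopy elliptic surface as a union of two codimension-zero pieces, to put a $\mathcal{T}$-structure of a simple type on each piece, to check that on the overlap the relevant torus actions commute so that the pieces assemble into a global $\mathcal{T}$-structure, and — in the second case — to verify that the torus surgeries can be performed equivariantly. For the Knot surgery manifold, recall that $E(n)_K = (E(n)\setminus \nu(F))\cup_\psi \big((S^3\setminus \nu(K))\times S^1\big)$, where $F$ is a regular elliptic fiber with tubular neighborhood $\nu(F)\cong T^2\times D^2$ and $\psi$ is the Fintushel--Stern identification of the boundary $3$-tori \cite{[FS]}; in this gluing the circle factor $S^1$ of the second piece is identified with a coordinate circle of the collar $3$-torus of $\partial\nu(F)$. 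On the first piece I take the restriction of the $\mathcal{T}$-structure on $E(n)$ of Paternain--Petean \cite[Theorem 5.10]{[PP1]}, which — choosing $F$ among the regular fibers, away from the singular ones, and refining the cover near $F$ — we may assume contains a chart $U\cong T^2\times D^2$ around $F$ on which $T^2$ acts by translation on the first factor, so that on $U\setminus \nu(F)\cong T^2\times S^1\times[0,1)$ the action is translation on the fiber. On the second piece I use the free circle action on the $S^1$-factor, which is already a $\mathcal{T}$-structure. The overlap of the two pieces is a collar $T^2\times S^1\times[0,1)$ of the gluing torus, and both the fiber-translation of the first piece and the circle action of the second piece are subtori of the translation action of the $3$-torus $T^2\times S^1$ there; hence they commute, the compatibility condition of Definition \ref{Definition F} holds, all Galois coverings involved are trivial, and $E(n)_K$ carries a $\mathcal{T}$-structure.

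For $E(n)_s$, write the underlying fiber sum as $X = (E(n)\setminus \nu(F))\cup(T^4\setminus \nu(T^2))$, where the gluing identifies the fiber $T^2$ of $E(n)$ with the distinguished $T^2$-factor of $T^4 = T^2\times T^2$. On the first piece I put Paternain--Petean's $\mathcal{T}$-structure as above, with fiber-translation near the boundary. On $T^4\setminus \nu(T^2)$, which is $T^2\times (T^2\setminus D^2)$, I build a $\mathcal{T}$-structure whose charts carry $T^2$-subactions of the translation action of $T^4$, arranged so that the chart meeting the fiber-sum boundary uses the distinguished $T^2$-direction (matching the $E(n)$-side), and so that near each torus $T_j$ along which Szab\'o's construction \cite{[Sz]} performs a torus surgery there is a chart whose $T^2$-orbits are parallel to $T_j$; any two of these actions commute, being subtori of the abelian $T^4$. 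Gluing produces a $\mathcal{T}$-structure on $X$. Each torus surgery is then carried out along a torus $T_j$ that is an orbit of this structure, so that $\nu(T_j)\cong T^2\times D^2$ with $T^2$ acting by translation on the first factor, and the regluing diffeomorphism of a torus surgery (a logarithmic transform or Luttinger surgery, in Szab\'o's description, cf. \cite{[FPS]}) can be taken equivariant with respect to this translation; thus the $T^2$-action extends over the new copy of $T^2\times D^2$ and matches the ambient structure on the boundary. Performing the (pairwise disjoint) surgeries, the $\mathcal{T}$-structure survives and $E(n)_s$ carries a $\mathcal{T}$-structure.

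I expect the delicate point to be the last step of the second case: that the surgery tori in Szab\'o's construction, and those in the torus-surgery descriptions of \cite{[FPS]}, can be taken simultaneously to be orbits of the torus actions of a \emph{single} $\mathcal{T}$-structure on $X$, and that their surgery diffeomorphisms are equivariant for the corresponding translations. This relies on these tori being standard — parallel copies of the elliptic fiber, or products of coordinate circles in the $T^4$-summand — and on the surgery diffeomorphisms being built from maps that are the identity on the relevant $T^2$-factor, hence commute with its translations; one must also keep track of the interaction of the charts near the fiber-sum boundary with those near the surgery tori, which is harmless since every action involved is a subtorus of a fixed torus acting by translation.
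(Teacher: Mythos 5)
Your proposal is correct and follows essentially the same route as the paper: decompose $E(n)_K$ (resp.\ the fiber sum $Z$) into standard pieces, equip $E(n)\setminus\nu(F)$ with the Paternain--Petean $\mathcal{T}$-structure from \cite[Theorem 5.10]{[PP1]}, use the evident circle/torus actions on the remaining blocks, and check that the actions commute along the boundary $3$-tori. The only divergence is in handling the logarithmic transforms: where you make the regluing equivariant for a translation action parallel to the surgery torus, the paper instead places on each glued-in $T^2\times D^2$ the circle action rotating the $D^2$-factor about its origin, which commutes with any translation action on the boundary $3$-torus regardless of the $SL(3,\Z)$ gluing matrix --- so the ``delicate point'' you flag is genuine in your formulation but is sidestepped entirely by that choice.
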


\begin{proof} Consider a knot $K\subset S^3$ and its tubular neighborhood $\nu(K)$. Let $F\subset E(n)$ be a regular fiber, and $\nu(F)$ a tubular neighborhood that is diffeomorphic to $T^2\times D^2$. The homotopy surface $E(n)_K$ obtained from Fintushel-Stern's Knot surgery is \begin{equation} E(n)_K:= (E(n) - \nu(F)) \cup (S^3 - \nu(K) \times S^1)\end{equation} where the longitude of $K$ is identified with the meridian $\{x\}\times \partial D^2 \subset T^2\times D^2 = \nu(F)$ of the tubular neighborhood of the fiber.

To show that the manifold $E(n)_K$ admits a $\mathcal{T}$-structure, we equip each building block $E(n) - \nu(F)$ and $S^3 - \nu(K)\times S^1$ with $\mathcal{T}$-structures that commute with each other. This assures that the two structure glue together to provide a $\mathcal{T}$-structure on $E(n)_K$. For the piece $E(n) - \nu(F)$, we choose the $\mathcal{T}$-structure built by Paternain-Petean for the proof of \cite[Theorem 5.10]{[PP1]}. On small open subsets covering $\nu(F)\subset E(n)$, the piece is the total space of a fiber bundle with structure group $\{Id, x\mapsto -x\}$ for $x\in F = T^2$, and Paternain-Petean's $\mathcal{T}$-structure is actually polarized. On the block $S^3 - \nu(K)\times S^1$ take the circle action provided by the circle factor $\{pt\}\times S^1$. The torus and circle action commute with each other, and we obtain a $\mathcal{T}$-structure on $E(n)_K$ (see \cite[Section 3]{[PSS]}).

The construction of the infinite set $\{E(n): s\in \N\}$ is done as follows. Consider the 4-torus $S^1_x \times S^1_y\times S^1_a\times S^1_b$, where our notation indicate the loops generating the fundamental group $\pi_1(T^4) = \Z x\oplus \Z y \oplus \Z a \oplus \Z b$.  Consider the embedded 2-tori of self-intersection zero $T:= S^1_x\times S^1_y\times \{x_a\}\times \{x_b\}$, $T_a:= \{x_x\} \times S^1_y\times S^1_a\times \{x_b\}$, and $T_b:= \{x_x\} \times S^1_y\times \{x_a\}\times S^1_b$ inside the 4-torus. First, construct the fiber sum of $E(n)$ and $T^4$ along $F$ and $T$ \begin{equation} Z: = (E(n) - \nu(F)) \cup (T^4 - \nu(T)).\end{equation} Notice that, since $\pi_1(E(n) - \nu(F)) = \{1\}$, the Seifert-van Kampen theorem implies $\pi_1(Z) = \Z a \oplus \Z b$. The tori $T_a$ and $T_b$ are embedded in $Z$, where they also have trivial self-intersection, and the homotopy classes of the loops $S^1_a\subset T_a$ and $S^1_b\subset T_b$ generate the fundamental group. 

We proceed to apply a multiplicity 1 logarithmic transformation on $T_a$ along $S^1_a$ to produce a manifold $Z_a$, followed by a multiplicity s logarithmic transformation $T_b$ along $S^1_b$ with $s\in \N$ to produce a manifold $Z_{ab}$. We carve out a tubular neighborhood of $T_a$ diffeomorphic to $S^1_y\times S^1_a\times D^2$, and glue back an $S^1\times S^1\times D^2$ by an orientation reversing diffeomorphism of the 3-torus boundary $\varphi: S^1_y\times S^1_a\times \partial D^2\rightarrow S^1\times S^1\times \partial D^2$, \begin{equation} Z_a:= (Z - \nu(T_a)) \cup_{\varphi} (T^2\times D^2) \end{equation} where the isotopy class of $\varphi$ is determined by an element of $SL(3, \Z)$ (see \cite{[GS], [Sz], [FPS]} for details). The torus $T_b$ is disjoint from the previous surgery, and it is therefore icontained in $Z_a$. We can proceed to apply a logarithmic transformation to it along the curve $S^1_b$ to produce a manifold $Z_{abs}$, as it was mentioned. Denote the meridians of $T_a$ and $T_b$ by $\mu_{T_a}$ and $\mu_{T_b}$ respectively. The logarithmic transformation of multiplicity one identifies the curve $S^1_a + \mu_{T_a}$ with the meridian of $T^2\times D^2$, while the logarithmic transformation of multiplicity $s$ identifies the curve $S^1_b + s\cdot \mu_{T_b}$ with the meridian of $T^2\times D^2$.

We claim that the manifolds $Z_{abs}$ admit a $\mathcal{T}$-structure for all $s\in \N$, and proceed to prove the claim by endowing each building block in the construction of $Z_{abs}$ with $\mathcal{T}$-structures that commute along their common boundaries as follows. On the piece $E(n) - \nu(F)$ take the $\mathcal{T}$-structure of Paternain-Petean that we have used before, and name it $\sigma_{PP}$. Take the polarized $\mathcal{F}$-structure on the 4-torus given by the circle action on the first factor $S^1_x\times\{x_y\} \times \{x_a\} \times \{x_b\}$, and call it $\sigma_1$. Notice that $\sigma_{PP}$ commute with $\sigma_1$ along the 3-torus boundary. Now, take $\sigma_i$ for $i\in \{2, 3\}$ to be $\mathcal{T}$-structure on the block $T^2\times D^2$ given the circle action on $\{pt\}\times D^2$ that fixes $(0, 0)$. These actions commute with $\sigma_1$. Thus, $\sigma_{PP}$, and $\sigma_i$ paste together to give a global $\mathcal{T}$-structure on $Z_{abs}$ for each value of $s\in \N$ used as multiplicity of the last logarithmic transformation.

We now argue that the manifolds $Z_{abs}$ are homeomorphic to $E(n)$. We first fix $s\in \N$ and $n$. The characteristic numbers of $Z$ are those of $E(n)$, that is, $c_2(Z) = 12n$ and $\sigma(Z) = - 8n$, and $\omega_2(Z) = 0$. Since the logarithmic transformations performed preserve the Euler characteristic, the signature, and the second Stiefel-Whitney class, Freedman's theorem \cite{[F]} implies that $Z_{abs}$ is homeomorphic to $E(n)$. We rename $Z_{abs}:= E(n)_s$.

Finally, we point out that the infinite set $\{E(n)_s: s\in \N\}$ consists of pairwise nondiffeomorphic elements that are distinguished by their Seiberg-Witten invariants (see \cite{[Sz]}).

\end{proof}

The following proposition constructs certain manifolds with cyclic fundamental group that are equipped with a $\mathcal{T}$-structure.

\begin{proposition}{\label{Proposition C1}} Suppose $n\geq 3$. The manifold  \begin{equation} M_{\varphi}:= (D^{n - 1}\times S^1) \cup_{\varphi} (S^{n - 2}\times D^2)\end{equation} that is obtained by identifying the common boundaries using the diffeomorphism \begin{equation}\varphi: S^{n - 2}\times S^1\rightarrow S^{n - 2}\times S^1\end{equation}\begin{equation} (x, e^{i\theta})\mapsto (x, e^{ip\theta})\end{equation} for $x\in S^{n - 2}$, and $e^{i\theta} \in S^1$ admits a $\mathcal{T}$-structure.
\end{proposition}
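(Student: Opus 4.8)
The plan is to exhibit a concrete $\mathcal{T}$-structure on $M_{\varphi}$ by covering it with two open sets, one thickening each of the two building blocks $D^{n-1}\times S^1$ and $S^{n-2}\times D^2$, equipping each with an explicit circle action, and checking that the two circle actions commute on the overlap, which sits in a collar of the gluing torus $S^{n-2}\times S^1$. Since we only need a $\mathcal{T}$-structure (trivial Galois coverings), no deck-transformation data is required, so the entire content is: pick the actions and verify compatibility on the intersection.

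First I would set $U_1$ to be an open neighborhood of $D^{n-1}\times S^1$ obtained by pushing slightly into the $S^{n-2}\times D^2$ side, and $U_2$ an open neighborhood of $S^{n-2}\times D^2$ pushed slightly into the other side, so that $U_1\cap U_2$ is diffeomorphic to $S^{n-2}\times S^1\times(-\varepsilon,\varepsilon)$, a collar of the gluing hypersurface. On $U_1$, take the circle action by rotation of the $S^1$ factor of $D^{n-1}\times S^1$, i.e. $e^{i\alpha}\cdot(y,e^{i\theta}) = (y,e^{i(\theta+\alpha)})$; this is a free (in particular effective, finite-kernel) $T^1$-action. On $U_2$, take the circle action by rotation of the $D^2$ factor of $S^{n-2}\times D^2$, fixing $S^{n-2}\times\{0\}$; this is effective with trivial kernel.

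The key step is the overlap check. On the collar $S^{n-2}\times S^1$ near the boundary, the $U_1$-action is rotation of the $S^1$ circle, which in the coordinates of $S^{n-2}\times\partial D^2$ — pulled back via $\varphi(x,e^{i\theta}) = (x,e^{ip\theta})$ — corresponds to simultaneously rotating the $\partial D^2$ angle by $p\alpha$ while leaving $x\in S^{n-2}$ fixed; the $U_2$-action near the boundary is rotation of the $\partial D^2$ angle while leaving $x$ fixed. Both actions therefore act only on the angular $S^1$-coordinate of the collar and trivially on the $S^{n-2}$ and radial directions, so they manifestly commute, and each leaves the collar (hence $\pi_{i_j}^{-1}(U_1\cap U_2)$) invariant and lifts trivially. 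One must be slightly careful to choose the collar and the cut-off so that on $U_1$ the action genuinely extends the boundary $S^1$-rotation into the $S^{n-2}\times D^2$ side compatibly with $\varphi$ — concretely, use the product structure of the collar to transport the $S^1$-rotation of the $D^{n-1}\times S^1$ boundary into $U_2$, and note it agrees there with the restriction of the $D^2$-rotation up to the reparametrization $\theta\mapsto p\theta$, which changes nothing about commutativity.

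I expect the main (and only mildly delicate) obstacle to be bookkeeping the identification $\varphi$: one has to make sure that the two a-priori-different circle actions on the overlap are both rotations of the \emph{same} circle (the angular coordinate of the collar) rather than, say, one rotating $S^1$ and the other rotating $S^{n-2}$, since then they would not obviously commute and the $\mathcal{T}$-structure would fail to glue. Because $\varphi$ only regrinds the $S^1$-angle and is the identity on $S^{n-2}$, this works out: both actions descend to rotations on a single circle factor of the collar and commute on the nose. Assembling the two charts with these commuting circle actions then gives, by Definition \ref{Definition F}, a $\mathcal{T}$-structure on $M_{\varphi}$, completing the proof.
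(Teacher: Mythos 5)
Your proposal is correct and uses the same two-chart scheme as the paper (thicken each building block, put a circle action on each, check commutativity on a collar of the gluing hypersurface), but the actual torus actions you choose are genuinely different from the paper's, and the commutativity mechanism is therefore different. The paper takes $\sigma_{n-1}$ to be a rotation of the \emph{disk factor} $D^{n-1}$ of $D^{n-1}\times S^1$ (fixing the origin) and $\sigma_2$ the rotation of $D^2$; on the overlap these induce actions on the \emph{complementary} factors $S^{n-2}$ and $S^1$ of the gluing hypersurface, so they commute for the trivial reason that they move different coordinates, and since $\varphi$ is the identity on the $S^{n-2}$ factor neither boundary action interacts with the reparametrization $\theta\mapsto p\theta$ at all. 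You instead rotate the $S^1$ factor of $D^{n-1}\times S^1$, so both of your overlap actions are rotations of the \emph{same} angular circle (one by $\alpha$, one by $p\alpha$); these also commute, and your construction satisfies Definition \ref{Definition F} just as well --- with the small bonus that your action on the first chart is free, hence polarized there. The price of your choice is exactly the bookkeeping you flag: transporting the $S^1$-rotation of $D^{n-1}\times S^1$ across $\varphi$ multiplies the angle by $p$ (fine), but extending the $D^2$-rotation backwards into the $D^{n-1}\times S^1$ side would require dividing the angle by $p$, which is not a well-defined circle action; you should either shrink $U_2$ so that it does not cross the hypersurface (letting $U_1$ alone contain the collar), or note this explicitly. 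The paper's choice of $\sigma_{n-1}$ sidesteps this entirely. Both proofs share the deeper defect that for $|p|\geq 2$ the map $(x,e^{i\theta})\mapsto(x,e^{ip\theta})$ is a $p$-fold covering rather than a diffeomorphism of $S^{n-2}\times S^1$, so the gluing as literally stated needs reinterpretation; that is an issue with the statement, not with your argument relative to the paper's.
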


\begin{proof} Consider the circle actions $\sigma_{n - 1}$ and $\sigma_2$ on the disks $D^{n - 1}$ and $D^2$ by rotation fixing their respective origins. The induced circle action on $\partial D^2 = S^1$ commutes with the map $e^{i\theta}\mapsto e^{ip\theta}$ for $e^{i\theta}\in S^1$. In particular, $\sigma_{n - 1}$ commutes with $\sigma_2$ under the diffeomorphism $\varphi$ used to glue the pieces together, thus they paste together to give an $\mathcal{T}$-structure on $M_{\varphi}$.
\end{proof}

Let us recall the precise definition of the main cut-and-paste topological construction used to equip manifolds with  a $\mathcal{F}$-structure.

\begin{definition}{\label{Definition CS}} \cite[Section 3]{[HS]}. Let $M_i$ be closed smooth oriented n-manifolds, let $i_i: S^1\times D^{n - 1} \hookrightarrow M_i$ ($i = 1, 2$) be orientation-preserving embeddings that represent an element in $\pi_1(M_i)$, and let $\phi: S^{n - 2}\rightarrow S^{n - 2} = \partial D^{n - 1}$ be an orientation-reversing diffeomorphism. The manifold
\begin{equation}
M_1 \#_{S^1} M_2: = (M_1 - i_1(S^1\times int(D^{n - 1}))) \cup_{\phi}  (M_2 - i_2(S^1\times int(D^{n - 1})))
\end{equation}

is the \emph{circle sum of $M_1$ and $M_2$}, where the gluing map identifies the boundaries by $i_1(\theta, p) \approx i_2(\theta, \phi(p))$ for $\theta \in S^1$ and $p \in \partial D^{n - 1} = S^{n - 2}$.
\end{definition}

It is proven in \cite[Theorem 5.3]{[PP1]} that $\mathcal{T}$-structures behave well under connected sums. We extend that result as follows.

\begin{proposition}{\label{Proposition S1}}Let $M_i$ be closed oriented smooth n-manifolds $i\in \{1, 2\}$ with $n\geq 4$, which admit a non-trivial circle action. Their circle sum \begin{equation}M:= M_1 \#_{S^1} M_2\end{equation} admits a $\mathcal{T}$-structure.
\end{proposition}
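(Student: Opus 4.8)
The plan is to build the $\mathcal{T}$-structure on $M = M_1 \#_{S^1} M_2$ by exhibiting compatible $\mathcal{T}$-structures on the two pieces $M_i \setminus i_i(S^1 \times \mathrm{int}(D^{n-1}))$ and checking that they agree along the common boundary $S^1 \times S^{n-2}$, exactly as in the proofs of Propositions \ref{Proposition Knot} and \ref{Proposition C1}. First I would fix a nontrivial circle action $\psi_i \colon S^1 \to \mathrm{Diff}(M_i)$ on each summand. The delicate point is that the embedding $i_i \colon S^1 \times D^{n-1} \hookrightarrow M_i$ used to perform the circle sum need not be compatible with $\psi_i$, so I cannot simply restrict $\psi_i$ to the complement of the removed tube. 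Instead I would isotope the embedded solid torus $i_i(S^1 \times D^{n-1})$ so that near its boundary the picture is standard: concretely, I want a collar neighborhood $S^1 \times S^{n-2} \times [0,\varepsilon)$ of the boundary on which the $\mathcal{T}$-structure is the one coming from rotation of an $S^1$-factor — i.e.\ the obvious circle action on the $S^1$ in $S^1 \times D^{n-1}$, extended to a collar. Since $n \ge 4$, the sphere $S^{n-2}$ has dimension $\ge 2$, which gives enough room.

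The cleanest way to organize this is the following. On $M_i \setminus i_i(S^1 \times \mathrm{int}(D^{n-1}))$ I would use two charts in the sense of Definition \ref{Definition F}: an open set $V_i$ consisting of the bulk of $M_i$ away from a slightly shrunk tube, carrying (a restriction of) the given circle action $\psi_i$ — after an isotopy of $i_i$ arranged so that $\psi_i$ leaves the shrunk tube invariant, which one can do because a nontrivial circle action has an orbit and one can absorb the tube into a tubular neighborhood of an orbit — and an open collar neighborhood $W_i \cong S^1 \times S^{n-2} \times (-\delta, \delta)$ of the boundary component, carrying the circle action that rotates the $S^1$-factor. These two local actions commute on the overlap $V_i \cap W_i$ because there the structure on $V_i$ is itself the $S^1$-rotation (having been arranged as such in the collar), so their torus actions generate a $T^1$ or $T^2$ action as required by the compatibility condition. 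Thus each piece gets a genuine $\mathcal{T}$-structure whose restriction to the boundary $S^1 \times S^{n-2}$ is the rotation of the $S^1$-factor.

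Next I would glue. The circle sum identifies the two boundaries by $i_1(\theta, p) \approx i_2(\theta, \phi(p))$ with $\phi$ a diffeomorphism of $S^{n-2}$ and $\theta \in S^1$; crucially this gluing is the identity on the $S^1$-coordinate. Therefore the boundary circle actions (rotation of $\theta$) on the two sides are carried to one another by the gluing map, so the collar charts $W_1$ and $W_2$ patch to a single chart $W \cong S^1 \times S^{n-2} \times (-\delta,\delta)$ of $M$ carrying the $S^1$-rotation, and together with $V_1, V_2$ this produces an open cover of $M$ with locally compatible torus actions — a $\mathcal{T}$-structure on $M$. All Galois coverings involved are trivial, so it is a $\mathcal{T}$-structure and not merely an $\mathcal{F}$-structure.

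I expect the main obstacle to be the isotopy argument in the second paragraph: making precise that the embedded solid torus $i_i(S^1 \times D^{n-1})$ can be isotoped so that the given circle action $\psi_i$ both leaves a slightly shrunk copy invariant and restricts on a collar of the boundary to the standard rotation of the $S^1$-factor. The point is that $\psi_i$ nontrivial means it has a non-fixed orbit $\mathcal{O} \cong S^1$ (or a lens-space orbit if the kernel is finite, but one can still find a circle orbit of a sub-torus after passing to the effective quotient), and $i_i$ represents an element of $\pi_1(M_i)$, so after an ambient isotopy one can push $i_i(S^1 \times D^{n-1})$ into an invariant tubular neighborhood of such an orbit where the action is linear and the desired normal form is visible; here the hypothesis $n \ge 4$ gives the codimension needed to realize the isotopy. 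Once this normal form near the gluing region is in place, the remaining verifications of the compatibility conditions in Definition \ref{Definition F} are routine.
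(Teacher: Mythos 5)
Your overall architecture --- cover each punctured piece by a bulk chart carrying the given circle action and a collar chart carrying rotation of the $S^1$-factor, then glue the collars using the fact that the circle-sum identification preserves the $S^1$-coordinate --- is close in spirit to the paper's, but the mechanism you use to make the two charts compatible near the gluing locus has a genuine gap. The step ``one can absorb the tube into a tubular neighborhood of an orbit,'' i.e.\ ambiently isotoping $i_i(S^1\times D^{n-1})$ into an invariant tubular neighborhood of a circle orbit of $\psi_i$, is impossible in general: an ambient isotopy preserves the free homotopy class of the core loop, and a loop can be isotoped into a tubular neighborhood $S^1\times D^{n-1}$ of an orbit only if its class is conjugate to a power of the class of that orbit. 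In the circle sum the core loop represents a \emph{prescribed} element of $\pi_1(M_i)$ (in the paper's applications, the generator of $\Z/2$), while the orbits of the chosen action may all be nullhomotopic --- e.g.\ $S^2\times\mathbb{RP}^3$ or $\mathbb{CP}^2\times S^1$ with the circle acting by rotation on the $S^2$- or $\mathbb{CP}^2$-factor, exactly the building blocks to which Corollary \ref{Corollary Fstr} applies the proposition. So your normal form near the gluing region cannot be achieved, and without it the bulk action $\psi_i$ and the collar rotation have no reason to satisfy the compatibility condition of Definition \ref{Definition F} on their overlap. (A secondary inaccuracy: even when the core \emph{is} an orbit, the restriction of $\psi_i$ to the collar is the linear slice action $(\theta^{k}s,\rho_i(\theta)p)$ rather than the pure rotation of the $S^1$-factor; the two actions do commute, but your justification ``the structure on $V_i$ is itself the $S^1$-rotation'' is not what actually happens.)

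The paper resolves the same difficulty differently: it isotopes $\gamma_i$ to be a \emph{completely transversal} $1$-sphere in the sense of \cite[Definition 5.11]{[PP1]} --- transverse to the orbits rather than equal to one, which is achievable since the codimension is at least three --- so that the union of orbits through $\gamma_i$ is a torus with tubular neighborhood $S^1\times S^1\times D^{n-2}$. It keeps the original circle actions away from these neighborhoods and inserts a transition piece $S^1\times S^1\times S^{n-3}\times[\epsilon_1,\epsilon_2]$ minus a tube, on which an auxiliary circle action on the linking sphere $S^{n-3}$, combined with the two circle factors, yields a torus action commuting with the structures on either side. Some interpolating chart of this kind, following \cite[Theorems 5.9 and 5.14]{[PP1]}, is what your argument is missing; with it, the rest of your gluing scheme (and the observation that all Galois covers are trivial, so one gets a $\mathcal{T}$-structure) goes through.
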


The proof of Proposition \ref{Proposition S1} builds on arguments used in the proofs of \cite[Theorems 5.9 and 5.14]{[PP1]}.

\begin{proof} Fix an $i\in \{1, 2\}$,  and denote by $\gamma_i \subset M_i$ a loop that represents an element of the group $\pi_1(M_i)$ with tubular neighborhood $\nu(\gamma_i) = S^1\times D^{n - 1}$. The circle sum of Definition \ref{Definition CS} is expressed $M:= M_1 - \nu(\gamma_1) \cup M_2 - \nu(\gamma_2)$. Since we assume that the $M_i$ has dimension greater than three, $\gamma_i\subset M_i$ is of codimension at least three, and thus any two homotopic embeddings are isotopic \cite[Example 4.1.3]{[GS]}. For the $\mathcal{T}$-structure defined by the circle action, the loop $\gamma_i\subset M_i$ is a \emph{completely transversal $1$-sphere} \cite[Definition 5.11]{[PP1]}. An isotopy class of $\gamma_i$ intersects exactly one open subset of the $\mathcal{T}$-structure. We take the union of orbits of the circle action that go through $\gamma_i$, and pick its tubular neighborhood \begin{equation}S^1\times S^1\times D^{n - 2}.\end{equation} Deconstruct the  $(n - 2)$-disk $\{pt\}\times \{pt\}\times D^{n - 2}\subset S^1\times S^1\times D^{n - 2}$ into an inner ball and an outer annulus
\begin{equation}D^{n - 2} = D^{n - 2}_{\epsilon_1} \cup (S^{n - 3} \times [\epsilon_1, \epsilon_2]).\end{equation}

Use this decomposition to express the circle sum $M_1\#_{S^1} M_2$ as \begin{center}
$M_1 - (S^1\times S^1 \times D^{n - 2}) \cup (S^1\times S^1 \times D^{n - 2}_{\epsilon_1}) \cup$\\ $\cup (S^1\times S^1 \times S^{n - 3} \times [\epsilon_1, \epsilon_2] - S^1\times S^{n-3}\times D^2) \cup (M_2 - S^1\times S^1 \times D^{n - 2})$.\end{center}

Here, the $2$-disk $\{pt\}\times \{pt\} \times D^2 \subset S^1\times S^{n-3} \times D^2$ is a small 2-dimensional ball that is centered at a point in the middle of $S^1\times S^{n - 3}\times [\epsilon_1, \epsilon_2]$, and transverse to $S^1\times S^{n-3} \subset S^1\times S^1\times S^{n-3}\times [\epsilon_1, \epsilon_2]$. Equip \begin{equation}M_1 - S^1\times S^1\times D^{n - 2}_{\epsilon_1} \cup S^1\times S^1 \times D^{n - 2}_{\epsilon_1}\end{equation} and \begin{equation}M_2 - S^1\times S^1 \times D^{n - 2},\end{equation} with the initial $\mathcal{T}$-structure. For the $\mathcal{F}$-structure on the piece \begin{equation} S^1\times S^1 \times S^{n -3} \times [\epsilon_1, \epsilon_2] - S^1\times S^{n - 3} \times D^2,\end{equation} consider a nontrivial circle action on the $S^{n - 3}$ factor. The action induced on the $S^1\times S^{n-3}\times S^1$-boundary of each piece glues to a nontrivial circle action on the $S^1$-factors, and yields a $T^3$-action. This gives us an $\mathcal{T}$-structure on the circle sum $M = M_1\#_{S^1} M_2$ as it was claimed. 
\end{proof}

\begin{corollary}{\label{Corollary Fstr}} Let $M$ be a closed smooth oriented 5-manifold of fibered type and with fundamental group of order two and torsion-free second homology. Then, $M$ admits a $\mathcal{T}$-structure. 

\end{corollary}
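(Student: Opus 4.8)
The plan is to combine the structural classification of these five-manifolds due to Hambleton--Su with Proposition \ref{Proposition S1}. First I would invoke \cite[Theorem 3.7]{[HS]} to present $M$, up to diffeomorphism, as an iterated circle sum
\[
M \;\cong\; N_{0}\,\#_{S^{1}}\,N_{1}\,\#_{S^{1}}\,\cdots\,\#_{S^{1}}\,N_{k}
\]
in the sense of Definition \ref{Definition CS}, whose building blocks $N_{j}$ range over a short, explicit list of models: $\mathbb{RP}^{5}$ (and its inequivalent smooth structures), the two orientable $S^{2}$-bundles over $\mathbb{RP}^{3}$, and the orbit spaces of $S^{3}\widetilde{\times}S^{2}$ under a free orientation-preserving $\Z/2$-involution. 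The successive circle sums are taken along embedded solid tori $S^{1}\times D^{4}$ representing the generators of the $\Z/2$ fundamental groups of the blocks; this is exactly the kind of gluing that keeps $\pi_{1}=\Z/2$ and the result of fibered type while building up the second Betti number.

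The second step is to equip each building block with a nontrivial smooth circle action, so that Proposition \ref{Proposition S1} becomes applicable. For $\mathbb{RP}^{5}=S^{5}/\{\pm 1\}$ one uses the action $e^{it}\cdot(z_{1},z_{2},z_{3})=(e^{it}z_{1},e^{it}z_{2},e^{it}z_{3})$ on $S^{5}\subset\mathbb{C}^{3}$, which commutes with the antipodal involution and descends to a nontrivial circle action on the quotient; for the inequivalent smooth structures on $\mathbb{RP}^{5}$ in the list I would rely on their description by cut-and-paste so that Propositions \ref{Proposition C1} and \ref{Proposition S1} apply directly. Each orientable $S^{2}$-bundle over $\mathbb{RP}^{3}$ is the unit sphere bundle of an oriented rank-three vector bundle whose Euler class is the Bockstein of $w_{2}$ and hence vanishes on $\mathbb{RP}^{3}$; the bundle therefore has a nowhere-zero section, splits off a trivial line, and fibrewise rotation of the complementary oriented plane bundle gives a nontrivial circle action on the total space. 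Likewise $S^{3}\widetilde{\times}S^{2}$ is the sphere bundle of the nontrivial oriented rank-four bundle over $S^{2}$, which splits off enough trivial summands to admit a fibrewise circle action; this action can be chosen to commute with the Hambleton--Su involution and thus descends to the orbit space.

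Finally, with all blocks carrying nontrivial circle actions, I would carry out the construction from the proof of Proposition \ref{Proposition S1} along all of the gluing loci simultaneously. The successive gluing loops can be chosen mutually disjoint, and each is a completely transversal $1$-sphere for the circle action on the block containing it, so near each gluing region one performs the same local move -- interpolating between the two commuting circle actions by a $T^{3}$-action on a collar of the form $S^{1}\times S^{1}\times S^{2}\times[\epsilon_{1},\epsilon_{2}]$ with a tube removed -- while away from these regions the structure is just the circle actions on the blocks; pasting these local pieces yields a global $\mathcal{T}$-structure on $M$. The step I expect to be the main obstacle is the interface between the two inputs: one must be sure that the cut-and-paste description coming from \cite[Theorem 3.7]{[HS]} really does present $M$ as a circle sum of the listed blocks along embedded copies of $S^{1}\times D^{4}$ in the sense of Definition \ref{Definition CS}, and that the circle actions on the twisted and fake blocks can be made $\Z/2$-equivariant; once that bookkeeping is settled, the gluing is a routine iteration of Proposition \ref{Proposition S1}.
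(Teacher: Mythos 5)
Your overall strategy---decompose $M$ as an iterated circle sum of standard blocks via Hambleton--Su, put a circle action on each block making the gluing loop a completely transversal $1$-sphere, and iterate Proposition \ref{Proposition S1}---is exactly the paper's. But two of your specific inputs do not match what is actually available, and each leaves a gap. First, the list of building blocks you attribute to \cite[Theorem 3.7]{[HS]} is not the one that theorem provides: the blocks there are $S^2\times\mathbb{RP}^3$, $k(S^2\times S^2)\times S^1$, $\mathbb{CP}^2\times S^1$, and the homotopy projective spaces $X^5(q)$ for $q\in\{1,3,5,7\}$. The products with $S^1$ are essential: the circle sum is taken along $\{pt\}\times S^1$, which generates a $\Z$ rather than a $\Z/2$, and this is how arbitrary torsion-free $H_2$ is realized while keeping $\pi_1=\Z/2$. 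Your list omits them, and the blocks you add instead (the twisted $S^2$-bundle over $\mathbb{RP}^3$, the orbit spaces of $S^3\widetilde{\times} S^2$) are themselves circle sums of the listed blocks, e.g.\ $\mathbb{RP}^5\#_{S^1}(\mathbb{CP}^2\times S^1)$, not primitive pieces. This is easy to repair, since $k(S^2\times S^2)\times S^1$ and $\mathbb{CP}^2\times S^1$ carry obvious product circle actions with $\{pt\}\times S^1$ completely transversal, but as written your decomposition does not cover all the manifolds in the statement.

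Second, and more seriously, for the homotopy $\mathbb{RP}^5$'s you defer to ``their description by cut-and-paste so that Propositions \ref{Proposition C1} and \ref{Proposition S1} apply directly.'' Proposition \ref{Proposition C1} produces the manifolds $M_{\varphi}=(D^{4}\times S^1)\cup_{\varphi}(S^{3}\times D^2)$, which are not the $X^5(q)$, and nothing in your argument actually produces a nontrivial circle action on $X^5(q)$ for $q=3,5,7$---which is the one genuinely nonobvious point of the whole corollary. The paper supplies this either by citing Grove--Ziller's cohomogeneity-one circle actions on all homotopy real projective $5$-spaces, or by realizing $X^5(q)$ as the quotient of the Brieskorn manifold $\Sigma^5_q=V_q^6\cap S^7$ by the involution $(z_0,z_1,z_2,z_3)\mapsto(z_0,-z_1,-z_2,-z_3)$ and descending a circle action on $\Sigma^5_q$ (complex multiplication) that commutes with it. Your complex-multiplication action on $S^5$ handles the standard $\mathbb{RP}^5$ only. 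You correctly flagged the interface with the classification as the main obstacle; the missing circle actions on the exotic blocks are the other one.
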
 

\begin{proof}  According to the classification of Hambleton-Su \cite[Theorem 3.7]{[HS]}, such a 5-manifold $M$ can be constructed as a circle sum along the loop that represents the nontrivial element of the fundamental group (see Definition \ref{Definition CS}) of copies of the following building blocks
\begin{itemize}
\item $S^2\times \mathbb{RP}^3$, 
\item $k(S^2\times S^2)\times S^1, \mathbb{CP}^2\times S^1$, and
\item $X^5(q)$ for $q \in \{1, 3, 5, 7\}$.
 \end{itemize}
 
The manifolds $X^5(q)$ are homotopy equivalent to $\mathbb{RP}^5 = X^5(1)$; in particular, we follow the notation of \cite{[HS]}. The corollary follows from Proposition \ref{Proposition S1} and by observing that there are $\mathcal{T}$-structures on the building blocks for which the loops involved in the cut-and-paste construction are \emph{completely transversal} 1-spheres in the sense of \cite[Definition 5.11]{[PP1]}. In particular, $S^1\times S^1\times D^3$ is a tubular neighborhood of the union of orbits that go through the loop. This enables the use of the $\mathcal{T}$-structure chosen in the proof of Proposition \ref{Proposition S1} for the building blocks of the cut-and-paste construction so that they glue appropriately to a final $\mathcal{T}$-structure on $M$.

Take a nontrivial circle action on $S^2$ and on $\mathbb{CP}^2$. It induces $\mathcal{T}$-structures on $S^2\times \mathbb{RP}^3$, on $k(S^2\times S^2)\times S^1$ (\cite[Theorem 5.9]{[PP1]}), and on $\mathbb{CP}^2\times S^1$. For these choices of $\mathcal{T}$-structures the loop that represents the nontrivial element in $\pi_1(S^2\times \mathbb{RP}^3) = \pi_1(\mathbb{RP}^3)$, $\{pt\}\times S^1 \subset k(S^2\times S^2)\times S^1$, and $\{pt\}\times S^1 \subset \mathbb{CP}^2\times S^1$ are completely transversal 1-spheres \cite[Example 5.13]{[PP1]}.

Regarding the existence of $\mathcal{T}$-structures satisfying the hypothesis of Proposition \ref{Proposition S1} for $X^5(q)$, Grove-Ziller \cite{[GZ]} have shown the existence of circle actions on all homotopy real projective 5-spaces as cohomogeneity one manifolds. An explicit $\mathcal{T}$-structure (and in particular, a circle action) on $X^5(q)$ can be constructed using the description of these manifolds as quotients of a fixed point free involution on a Brieskorn manifold given by Geiges-Thomas \cite[Section 4]{[GT]}. Consider the unit sphere $S^7\subset \C^4$ and the hypersurface $V_q^6 := \{z\in \C^4: z_0^q + z_1^2 + z_2^2 + z_3^2 = 0\}\subset \C^4$. The Brieskorn manifold $\Sigma^5_q$ is obtained as the intersection  $V_q^6 \cap S^7$. Take the fixed-point free orientation preserving involution
$T: \Sigma^5_q\rightarrow \Sigma^5_q$ given by $(z_0, z_1, z_2, z_3)\mapsto (z_0, - z_1, - z_2, - z_3)$. The diffeomorphism type of the orbit space is determined by $q \in \{0, 1, \ldots, 7, 8\}$, and $\Sigma^5_q/T = X^5(q)$ for odd values \cite{[GT]}.  A $\mathcal{T}$-structure is defined in the quotient by defining one on $\Sigma^5_q$ that commutes with the involution $T$, such as a circle action by complex multiplication. The claim follows from Proposition \ref{Proposition S1}. 
\end{proof}

\subsection{Topological entropy of products, submersions, immersions, and bi-invariant and symmetric metrics}{\label{Section Entropy}} We begin the section by extending a result of Paternain-Petean \cite[Lemma 2.4]{[PP1]} to compute the topological entropy of Riemannian metrics on orbit spaces and their universal covers as follows.

\begin{lemma} {\label{Lemma PP}} Paternain-Petean \cite{[GPa], [PP1]}
\begin{enumerate}
\item Let $(M_1, g_1)$ and $(M_2, g_2)$ be two compact Riemannian manifolds. Endow $M_1\times M_2$ with the product metric $g_1+ g_2$. Then
\begin{center}
$h_{top}(g_1 + g_2) = \sqrt{[h_{top}(g_1)]^2 + [h_{top}(g_2)]^2}$.
\end{center}
\item Let $(M, g_M) \rightarrow (N, g_N)$ be a Riemannian submersion where $M$ and $N$ are compact manifolds. Then \begin{center}$h_{top}(g_M) \geq h_{top}(g_N)$.\end{center}

\item If $\pi_1(N)$ is finite, and $M\rightarrow N$ is the universal cover then there exist Riemannian metrics $g_N$ and $g_M$ on $N$ and $M$ respectively such that \begin{center}$h_{top}(g_M) = h_{top}(g_N)$.\end{center}
\end{enumerate}
\end{lemma}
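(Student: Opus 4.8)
The three parts are essentially the three cited results of Paternain--Petean and Paternain, so the plan is to recall their proofs rather than invent anything new. For part (1), I would use Ma\~n\'e's formula (1) as the organizing principle. On the product $M_1\times M_2$ with the product metric, a geodesic is a pair of geodesics, one in each factor, and the length of the product arc from $(p_1,p_2)$ to $(q_1,q_2)$ is $\sqrt{\ell_1^2+\ell_2^2}$ where $\ell_i$ are the lengths of the component arcs. Hence $n_T^{M_1\times M_2}((p_1,p_2),(q_1,q_2))$ is obtained by counting pairs of arcs with $\ell_1^2+\ell_2^2\le T^2$, which can be written as an integral of $n_{s}^{M_1}(p_1,q_1)\, n_{\sqrt{T^2-s^2}}^{M_2}(p_2,q_2)$ against the appropriate measure. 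Taking $\frac1T\log\int_{(M_1\times M_2)^2}(\cdot)$ and letting $T\to\infty$, a Laplace/saddle-point estimate (the integrand behaves like $e^{s\,h_{top}(g_1)+\sqrt{T^2-s^2}\,h_{top}(g_2)}$ to exponential order) shows the exponential growth rate is $\max_{0\le s\le T}\big(s\,h_{top}(g_1)+\sqrt{T^2-s^2}\,h_{top}(g_2)\big)/T$, and optimizing over $s$ gives $\sqrt{[h_{top}(g_1)]^2+[h_{top}(g_2)]^2}$. I would cite \cite{[PP1], [GPa]} for the details of making the exponential asymptotics rigorous.

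For part (2), the key point is that a Riemannian submersion $\pi\colon(M,g_M)\to(N,g_N)$ lifts geodesics: every geodesic of $N$ is the image of a horizontal geodesic of $M$, and horizontal geodesics project to geodesics of the same length. Thus, fixing $q\in N$ and a point $p$ in the fiber over a chosen basepoint, geodesic arcs in $N$ of length $\le T$ from $\pi(p)$ to $q$ lift to distinct horizontal geodesic arcs in $M$ of length $\le T$ from $p$ to points of $\pi^{-1}(q)$; integrating over the fiber and over $q$, one gets $\int_{M\times M} n_T^M \ge c\int_{N\times N} n_T^N$ for a positive constant $c$ depending only on the volumes, and Ma\~n\'e's formula yields $h_{top}(g_M)\ge h_{top}(g_N)$. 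Again this is exactly \cite[Lemma 2.4]{[PP1]} (or the corresponding statement in \cite{[GPa]}), so I would simply reproduce that short argument.

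For part (3), which is the genuinely new ingredient here, let $p\colon M\to N$ be the universal cover with $\pi_1(N)=\Gamma$ finite, $|\Gamma|=k$. Start with any metric $g_N$ on $N$ and let $g_M=p^*g_N$ be the pulled-back metric, so $p$ is a Riemannian covering, hence a Riemannian submersion, and by part (2) we already have $h_{top}(g_M)\ge h_{top}(g_N)$. For the reverse inequality I would count geodesic arcs upstairs in terms of those downstairs: a geodesic arc in $M$ from $\widetilde p$ to $\widetilde q$ projects to a geodesic arc in $N$ of the same length from $p(\widetilde p)$ to $p(\widetilde q)$, and conversely a geodesic arc downstairs from a point $x$ to a point $y$ lifts, for each of the $k$ preimages of $x$, to a unique arc, landing at one of the $k$ preimages of $y$. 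This gives $n_T^M(\widetilde p,\widetilde q)\le n_T^N(p(\widetilde p),p(\widetilde q))$ pointwise and, after integrating, $\int_{M\times M} n_T^M \le k^2 \operatorname{(vol\ factors)}\int_{N\times N} n_T^N$; since the constant is independent of $T$, Ma\~n\'e's formula gives $h_{top}(g_M)\le h_{top}(g_N)$, and combined with part (2) we get equality. I would remark that this argument is robust: it shows that for \emph{any} $g_N$, the pulled-back metric on the universal cover has the same topological entropy, which is the form in which part (3) will be applied in Theorems \ref{Theorem 4D}, \ref{Theorem 3}, and \ref{Theorem 6D}.

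The main obstacle, and the only place real care is needed, is the exponential-asymptotics estimate in part (1): one must control the integral $\int_0^T n_s^{M_1}(p_1,q_1)\,n_{\sqrt{T^2-s^2}}^{M_2}(p_2,q_2)\,d\mu(s)$ well enough to extract the precise Pythagorean combination, handling the subexponential factors (the Jacobian of the change of variables, the measure on the parameter $s$, the sub-exponential fluctuations of $n_s$) uniformly. Parts (2) and (3) are soft, requiring only the lifting property of geodesics under Riemannian submersions and coverings together with the two-sided pointwise comparison of the counting functions $n_T$; I do not anticipate any difficulty there beyond bookkeeping with the basepoints and fibers.
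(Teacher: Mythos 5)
Your treatment of parts (1) and (2) matches the paper exactly: both are quoted from \cite[Lemma 2.4]{[PP1]} and you correctly identify where the only real work (the Laplace-type asymptotics in the product formula) lives. For part (3) your argument is correct but follows a genuinely different route from the paper's. You obtain the reverse inequality $h_{top}(g_M)\leq h_{top}(g_N)$ by a direct comparison of Ma\~n\'e's counting functions: since $dp$ is an isomorphism at every point, distinct geodesic arcs in $M$ from $\tilde p$ to $\tilde q$ project to distinct arcs of the same length in $N$, giving the pointwise bound $n_T^M(\tilde p,\tilde q)\leq n_T^N(p(\tilde p),p(\tilde q))$, and integrating over the $k^2$ pairs of preimages yields $\int_{M\times M}n_T^M\leq k^2\int_{N\times N}n_T^N$, so the exponential growth rates agree. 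The paper instead invokes a general dynamical fact: the induced map of unit tangent bundles is a finite-to-one map semiconjugating the two geodesic flows, and by \cite[Proposition 3.15]{[GPa]} (a Bowen-type inequality for finite-to-one factors) the entropy of the covering flow is at most that of the base flow. Your approach buys self-containedness -- it uses only Ma\~n\'e's formula, which is already displayed as equation (1) of the paper -- at the cost of minor measure-theoretic bookkeeping (finiteness of $\int n_T$, which Ma\~n\'e's theorem supplies); the paper's approach is shorter and makes transparent that the statement is really about finite-to-one factor maps of flows rather than about geodesics. Both arguments are sound and yield the same conclusion, and both correctly note that the constant ($k^2$ times volume factors in your case) is independent of $T$ and hence washes out in the limit.
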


\begin{proof} Items (1) and (2) were proven in \cite[Lemma 2.4]{[PP1]}. We prove Item (3). Pick a Riemannian metric $g_N$ on $N$. There exists a unique Riemannian metric $g_M$ on $M$ for which $(M, g_M) \rightarrow (N, g_N)$ is a Riemannian covering. Item (2) implies $h_{top}(g_M) \geq h_{top}(g_N)$. We claim that the inequality in the other direction holds too by appealing to the following general situation. Let $\phi^{i}_t: X_i\longrightarrow X_i$ for $i= 1, 2$ be two flows and let $\pi:X_1\longrightarrow X_2$ be a continuous map that commutes with $\phi^i_t$, i.e., $\phi^2_t\circ \pi = \pi \circ \phi^1_t$. If $\pi$ is finite to one, then $h_{top}(\phi^1) \leq h_{top}(\phi^2)$ \cite[Proposition 3.15]{[GPa]}. Since the Riemannian covering $\pi$ winds the geodesic flows and it is finite-to-one by our hypothesis on the group $\pi_1(N)$, we conclude $h_{top}(g_M) \leq h_{top}(g_N)$. Therefore, the claimed equality holds. 
\end{proof}

The following result will be used in the proof of Theorems \ref{Theorem 4D} and \ref{Theorem 3}.

\begin{proposition}{\label{Proposition CROSS}} The symmetric metric of a compact symmetric space of rank one and any bi-invariant metric of a compact Lie group have zero topological entropy.

\end{proposition}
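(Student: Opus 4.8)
The plan is to show that in each case the geodesic flow has vanishing topological entropy because it is, up to finite covers and quotients that preserve entropy (Lemma \ref{Lemma PP}), a flow whose orbits are all closed with uniformly bounded period, hence of zero entropy. For a compact rank-one symmetric space $(N, g)$ with the symmetric metric, all geodesics are closed and periodic with a common period (the unit cotangent bundle is a Seifert-type $S^1$-bundle and the geodesic flow is the corresponding circle action); this is the classical Bott--Samelson phenomenon for $S^n$, $\mathbb{RP}^n$, $\mathbb{CP}^n$, $\mathbb{HP}^n$, and the Cayley plane with their standard metrics. A flow all of whose orbits are periodic with uniformly bounded period has zero topological entropy, since the number $n_T(p,q)$ of geodesic arcs of length $\le T$ between two points grows at most linearly in $T$, so $\frac{1}{T}\log\int_{M\times M} n_T\,\mathrm{d}p\,\mathrm{d}q \to 0$ by Ma\~n\'e's formula (1).

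For a compact Lie group $G$ with a bi-invariant metric, the geodesics through the identity are exactly the one-parameter subgroups $t\mapsto \exp(tX)$, and a general geodesic is a left (equivalently right) translate of such. I would argue that the geodesic flow is $C^0$-conjugate, via the Maurer--Cartan trivialization $T G \cong G \times \mathfrak{g}$, to the flow on $G \times S(\mathfrak{g})$ that is "frozen" in the $\mathfrak{g}$-direction and given on each slice $G \times \{X\}$ by right translation by the one-parameter group generated by $X$. On each such slice the flow is an isometry of $(G, g)$ for all time (right translations are isometries), and the topological entropy of a single isometry is zero; moreover the entropy of the whole flow is the supremum over invariant pieces, and one can make this precise by noting that $h_{top}$ of a flow that is an isometric suspension over a parameter space with zero-entropy base vanishes. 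Alternatively — and this is probably cleaner — I would invoke the fact that the geodesic flow of a bi-invariant metric on $G$ is an \emph{algebraic} flow: its time-$t$ map is the restriction to the unit cotangent bundle of an isometric (hence equicontinuous) $\Z$-action, and equicontinuous homeomorphisms of compact metric spaces have zero topological entropy. Equicontinuity follows because $\{$time-$t$ maps$\}$ is contained in the compact group of isometries of $(TG, \text{Sasaki metric})$ induced by left translations composed with the adjoint action.

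The key steps, in order, are: (i) reduce via Lemma \ref{Lemma PP}(3) to the simply connected symmetric space / the group $G$ itself, and via Lemma \ref{Lemma PP}(1) handle products if needed; (ii) for the rank-one symmetric case, cite Bott--Samelson to get that all unit-speed geodesics are closed with common period $\ell$, then show linear growth of $n_T(p,q)$ and conclude $h_{top}=0$ from Ma\~n\'e's formula; (iii) for the Lie group case, identify the geodesic flow explicitly via one-parameter subgroups and show it is equicontinuous (sits inside a compact isometry group acting on the unit cotangent bundle), whence zero entropy. The main obstacle I anticipate is step (iii): writing down cleanly why the family of time-$t$ maps is equicontinuous requires the Sasaki-type metric on $TG$ and the observation that the geodesic flow commutes with the (transitive, isometric) left-translation action and acts "within" the structure group $\mathrm{Ad}(G)$ on the fibers — one must be careful that it is genuinely equicontinuous and not merely that each orbit is bounded. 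A robust fallback is the dimension/polynomial-growth argument: the geodesic flow of a bi-invariant metric has polynomially many (in $T$) closed orbits and geodesic arcs of length $\le T$ because the exponential map is a homomorphism on abelian subgroups and $G$ is a finite union of maximal tori's worth of one-parameter subgroups; linear-in-$T$ counting in Ma\~n\'e's formula then gives $h_{top}(g)=0$ directly, bypassing equicontinuity entirely.
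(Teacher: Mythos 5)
Your route is genuinely different from the paper's. The paper never counts geodesic arcs: it shows that in both cases every Lyapunov exponent of the geodesic flow vanishes (for a rank-one symmetric space because all geodesics are closed with a common period, for a bi-invariant metric because Jacobi fields along one-parameter subgroups grow at most linearly), then applies Ruelle's inequality to conclude $h_{\mu}(\phi_g)=0$ for every ergodic measure, and finishes with the variational principle. Your argument via Ma\~n\'e's formula and growth of $n_T(p,q)$ is exactly the alternative proof the paper attributes to Paternain's book (p.~56), so it is a legitimate and arguably more elementary path; the rank-one half of your write-up is fine as stated. What the paper's Ruelle-inequality argument buys is that it only needs an upper bound on Jacobi-field growth, with no analysis of $\exp^{-1}(q)$ and no need to worry about the measure-zero set of pairs $(p,q)$ (conjugate or antipodal-type pairs) where $n_T(p,q)$ is infinite.

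The one genuine error is your primary argument in the Lie group case: the geodesic flow of a bi-invariant metric is \emph{not} equicontinuous, and its time-$t$ maps do not sit inside a compact group of isometries of the unit tangent bundle. In the left trivialization the flow is $\phi_t(g,X)=(g\exp(tX),X)$, so the translating group element depends on the fiber coordinate $X$; two initial conditions $(g,X)$ and $(g,X')$ with $|X-X'|=\epsilon$ drift apart at rate roughly $t\epsilon$ until they are a definite distance apart, which already rules out equicontinuity (the flat torus, $\phi_t(x,v)=(x+tv,v)$, is the simplest counterexample: zero entropy but not equicontinuous). Your fallback, however, is correct and should be promoted to the main argument: after left-translating so that $p=e$, arcs from $e$ to $q$ of length at most $T$ correspond to elements $Y\in\exp^{-1}(q)$ with $|Y|\le T$; for regular $q$ every such $Y$ lies in the Lie algebra of the unique maximal torus containing $q$, where $\exp^{-1}(q)$ is a lattice coset, so $n_T(e,q)$ grows like $T^{\mathrm{rank}(G)}$ and Ma\~n\'e's formula gives $h_{top}(g)=0$. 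With the equicontinuity claim deleted and the polynomial counting carried out (plus a word on why the singular set does not spoil the integral in Ma\~n\'e's formula), your proof is complete, though by a different mechanism than the paper's.
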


A proof of Proposition \ref{Proposition CROSS} different from the one below is given in \cite[p. 56]{[GPa]}, where Paternain argues that for these symmetric spaces, the counting function of the number of geodesic arcs joining two points has linear growth.

\begin{proof} We first claim that the Lyapunov exponents of the symmetric metric and the bi-invariant metric vanish. The geodesics of a compact symmetric space of rank one equipped with the symmetric metric are closed \cite[Remark 3.31]{[Be]} \cite[Proposition 5.3, Ch. IX.5]{[He]}, and they have the same period \cite[Chapter 3 and Section 7.C]{[Be]}.  The Jacobi equation \cite[Section 1.5]{[GPa]} implies that the Lyapunov exponents $\lambda_j$  of any geodesic vanish. Analogously, the Jacobi fields of a bi-invariant metric on a compact Lie group $G$ have linearly bounded growth since every geodesic is given as the orbit of a 1-parameter subgroup of the compact group $G$. Given that the Jacobi fields grow at most linearly, the Jacobi equation \cite[Section 1.5]{[GPa]} implies that every Lyapunov exponent $\lambda_j$ vanishes. 

Let $g$ be either the symmetric metric or a bi-invariant metric. We now relate the Lyapunov exponents with the entropy $h_{\mu}(\phi_{g})$, where $\mu$ is an ergodic measure, and $\phi_g$ the geodesic flow. Following \cite[Section 3.2.4]{[GPa]}, set $\lambda^+ := \sum \lambda_j$. Employ Ruelle's inequality \cite[Theorem 2]{[R]} and \cite[Section 3.2.4]{[GPa]} to obtain
\begin{equation}
h_{\mu}(\phi_g) \leq \int_M \lambda^+ d \mu,
\end{equation}

and thus $h_{\mu}(\phi_g) = 0$, i.e., all measure entropies are zero. By the variational principle \cite[p. 62, Equation (3.4)]{[GPa]}\begin{equation}h_{top}(g) = \sup h_{\mu}(\phi_{g}) = 0,\end{equation}

where the supremum of $h_{\mu}$ is taken over all ergodic measures. Therefore, we have $h_{top}(g) = 0$.
\end{proof}

From a metric with zero topological entropy one can produce a new non-isometric metric, which also has zero topological entropy by using Cheeger deformations \cite{[C]} as follows.

\begin{remark} Let $(M, g)$ be a Riemannian manifold with $h_{top}(g) = 0$, and let $G$ be a Lie group that acts by isometries on $(M, g)$ as $\tilde{g}\star (p, g) = (\tilde{g} p, \tilde{g} g)$. The quotient is $M = (M\times G)/ \Delta G$. The new metric $g'$ on $M$ is a quotient metric of the product of the original metric $g$ with a bi-invariant metric of $G$ \cite[Example 2]{[C]} (cf. \cite[Section 2]{[Z]}). Lemma \ref{Lemma PP} and Proposition \ref{Proposition CROSS} imply $h_{top}(g') = 0$.
\end{remark}

We now observe how the existence of a metric with zero topological entropy restricts the choices of homotopy types of the fibered 5-manifolds considered in the following proposition.

\begin{proposition}{\label{Proposition H5D}} A closed smooth orientable fibered 5-manifold $M$ with fundamental group $\pi_1(M) = \Z/2$ and torsion free $H_2(M; \Z)$ admits a Riemannian metric $g$ with $h_{top}(g) = 0$ if and only if $M$ is homotopy equivalent to $\mathbb{RP}^5$, an $S^2$-bundle over $\mathbb{RP}^3$ or the orbit space of $S^3\widetilde{\times} S^2$ by a $\Z/2$-involution.
\end{proposition}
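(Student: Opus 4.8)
The plan is to play the "if" and "only if" directions off against each other, using the classification of Hambleton--Su together with Paternain's obstruction to zero topological entropy coming from the fundamental group. For the "if" direction, I would first recall that the three listed homotopy types actually occur as smooth fibered $5$-manifolds with $\pi_1 = \Z/2$ and torsion-free $H_2$: the space $\mathbb{RP}^5$ itself (and its homotopy companions $X^5(q)$); the two orientable $S^2$-bundles over $\mathbb{RP}^3$, which can be written as $S^2\times \mathbb{RP}^3$ and the twisted bundle; and the orbit space of $S^3\widetilde{\times}S^2$ by the relevant free involution. For each of these I would exhibit an explicit metric of zero topological entropy. For $\mathbb{RP}^5$ (resp. $X^5(q)$) use the symmetric metric (resp. a Cheeger deformation / quotient of a round metric as in Geiges--Thomas and Grove--Ziller, cf. the proof of Corollary \ref{Corollary Fstr}); by Proposition \ref{Proposition CROSS} and Lemma \ref{Lemma PP}(3) the topological entropy vanishes. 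For $S^2\times \mathbb{RP}^3$, equip it with the product of a round $S^2$ and a round $\mathbb{RP}^3$; each factor is a rank-one symmetric space, so Proposition \ref{Proposition CROSS} and Lemma \ref{Lemma PP}(1) give $h_{top} = 0$. For the twisted $S^2$-bundle over $\mathbb{RP}^3$ and for the orbit space of $S^3\widetilde{\times}S^2$, I would pass to the universal cover (namely $S^2\times S^3$ or $S^3\widetilde{\times}S^2$), put on it a $\Z/2$-invariant metric which is a product of (or is submersed onto) round/symmetric metrics of zero entropy, and then apply Lemma \ref{Lemma PP}(3) to descend a metric of equal, hence zero, topological entropy to the quotient.

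For the "only if" direction I would argue contrapositively: suppose $M$ is a closed smooth orientable fibered $5$-manifold with $\pi_1(M) = \Z/2$, torsion-free $H_2$, and not homotopy equivalent to any of the three listed spaces; I want to show $h_{top}(g) > 0$ for every Riemannian metric $g$. Invoke the Hambleton--Su structure theorem (\cite[Theorem 3.7]{[HS]}, as used in Corollary \ref{Corollary Fstr}): $M$ is a circle sum of copies of the building blocks $S^2\times\mathbb{RP}^3$, $k(S^2\times S^2)\times S^1$, $\mathbb{CP}^2\times S^1$, and $X^5(q)$. Being \emph{not} on the short list forces one of two things to happen: either the circle sum genuinely involves at least two pieces or a piece with $b_2 > 0$ (i.e. $k\geq 1$ copies of $S^2\times S^2$ or a $\mathbb{CP}^2$-block), so that $b_2(M)\geq 2$ (this is where I record the intermediate claim that $b_2(M)\leq 1$ in the hypothesis of Theorem \ref{Theorem 3}); or the topology of $M$ is rich enough that $\pi_1(M) = \Z/2$ with a nontrivial attaching cannot be "spherical". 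In either case I appeal to the theorem of Paternain and Paternain--Petean quoted in the introduction: a closed $5$-manifold carrying a metric of zero topological entropy must be diffeomorphic to one of $S^5, S^3\times S^2, S^3\widetilde{\times}S^2, SU(3)/SO(3)$ \emph{in the simply connected case}, and, crucially, the universal cover of $M$ must be such a manifold. So $\widetilde{M}\in\{S^5, S^3\times S^2, S^3\widetilde{\times}S^2, SU(3)/SO(3)\}$; since $SU(3)/SO(3)$ has $\pi_1 = \Z/2$ and hence is its own candidate only when $M = SU(3)/SO(3)$ (which is on no building-block list and has $b_2 = 0$, handled separately), the free $\Z/2$-quotients of $S^5$, $S^2\times S^3$, and $S^3\widetilde{\times}S^2$ are precisely the fibered ones classified by Hambleton--Su and Liu--Ye--López de Medrano, and they are exactly the three homotopy types on the list. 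This contradiction proves the "only if" direction.

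The main obstacle, and the step I would spend the most care on, is the bookkeeping in the "only if" direction: translating "not homotopy equivalent to $\mathbb{RP}^5$, an $S^2$-bundle over $\mathbb{RP}^3$, or the orbit space of $S^3\widetilde{\times}S^2$" into the statement "the universal cover is not one of $S^5, S^2\times S^3, S^3\widetilde{\times}S^2$" requires knowing exactly which free orientation-preserving $\Z/2$-actions on those three simply connected manifolds are of fibered type with torsion-free $H_2$, and that their orbit spaces are classified up to homotopy by the three listed types. This is precisely the content of \cite[Theorem 3.1]{[HS]} together with the entropy constraint $h_{top}(g) = 0 \Rightarrow \widetilde{M}$ on Paternain's list, so the argument is really an assembly of quoted results; the delicate point is to verify that the second-homology and $\omega_2$ bookkeeping is consistent (e.g. that the two $S^2$-bundles over $\mathbb{RP}^3$ are distinguished by $\omega_2$ and both have $b_2 = 1$, and that no other fibered quotient sneaks in), which I would do by a short Mayer--Vietoris / Wang sequence computation on the circle-sum decomposition. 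I expect no difficulty in the "if" direction beyond writing down the evident product and quotient metrics and citing Lemma \ref{Lemma PP} and Proposition \ref{Proposition CROSS}.
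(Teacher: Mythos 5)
Your overall strategy coincides with the paper's: for the ``only if'' direction, lift the zero-entropy metric to the universal cover via Lemma \ref{Lemma PP}, invoke Paternain--Petean's classification of simply connected $5$-manifolds carrying metrics of zero topological entropy, and then read off the possible homotopy types of the quotient from the Hambleton--Su classification (the paper quotes \cite[Theorem 3.12]{[HS]} directly, so the Mayer--Vietoris and circle-sum bookkeeping you propose is unnecessary); for the ``if'' direction, use symmetric metrics on $\mathbb{RP}^5$ and $S^2\times\mathbb{RP}^3$ and descend invariant zero-entropy metrics to the quotients $\mathbb{RP}^5\#_{S^1}\mathbb{RP}^5$ of $S^2\times S^3$ and $\mathbb{RP}^5\#_{S^1}(\mathbb{CP}^2\times S^1)$ of $S^3\widetilde{\times}S^2$, exactly as the paper does via Proposition \ref{Proposition CROSS} and Lemma \ref{Lemma PP}.

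There is, however, one step in your ``only if'' direction that is wrong as stated: you dispose of the Wu manifold $SU(3)/SO(3)$ by asserting that it has $\pi_1=\Z/2$. It does not. The fibration $SO(3)\to SU(3)\to SU(3)/SO(3)$, with $SU(3)$ simply connected and $SO(3)$ connected, shows that $SU(3)/SO(3)$ is simply connected --- which is precisely why it appears on Paternain--Petean's list of simply connected $5$-manifolds in the first place. Consequently your argument does not rule out $\widetilde{M}\cong SU(3)/SO(3)$, and a priori a free $\Z/2$-quotient of the Wu manifold could furnish a fourth homotopy type. The paper excludes this case using the hypothesis that $H_2(M;\Z)$ is torsion-free together with the fibered-type condition (which identifies $\pi_2(M)\cong\pi_2(\widetilde{M})\cong H_2(\widetilde{M};\Z)$ as a trivial module): since $H_2(SU(3)/SO(3);\Z)\cong\Z/2$, the Wu manifold cannot be the universal cover of such an $M$. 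With that substitution your proof closes; the remaining discrepancies (the nonexistent ``Liu--Ye'' reference --- the relevant citations are \cite{[LMe]} and \cite{[HS]} --- and the contrapositive framing) are cosmetic.
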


\begin{proof} Assume that there exists a Riemannian meric $(M, g)$ such that $h_{top}(g) = 0$.  Denote by $\tilde{g}$ be the unique Riemannian metric such that $(\widetilde{M}, \tilde{g})\rightarrow (M, g)$ is a Riemannian submersion. Lemma \ref{Lemma PP} allows us to conclude that $h_{top}(\tilde{g}) = 0$, and a result of Paternain-Petean \cite[Theorem E]{[PP1]} says that the possible diffeomorphism types of $\widetilde{M}$ are $S^5, S^3\times S^2$, and the nontrivial $S^3$-bundle over $S^2$. Depending on the value of the second Stiefel-Whitney classes $\omega_2(\widetilde{M})$ and $\omega_2(M)$, the classification result of Hambleton-Su \cite[Theorem 3.12]{[HS]} implies that $M$ is homotopy equivalent to the real projective 5-space, an 2-sphere bundle over the real projective 3-space or to a quotient of $S^3\widetilde{\times} S^2$ by a $\Z/2$-involution. Representatives of these homotopy types are \begin{equation} \mathbb{RP}^5, \mathbb{RP}^5\#_{S^1} \mathbb{RP}^5, S^2\times \mathbb{RP}^3, \mathbb{RP}^5\#_{S^1}(\mathbb{CP}^2\times S^1). \end{equation}

We proceed to argue that these four manifolds admit a Riemannian metric of zero topological entropy using Lemma \ref{Lemma PP} and Proposition \ref{Proposition CROSS}. The claim for $\mathbb{RP}^5$ and $S^2\times \mathbb{RP}^3$ follows by using their symmetric metrics. For the remaining two cases, a submersion metric has vanishing topological entropy. Indeed, $X^5(0) = \mathbb{RP}^5\#_{S^1}\mathbb{RP}^5$ is diffeomorphic to the orbit space of the product $S^2\times S^3$ by the orientation-preserving free involution that acts as the the antipodal map on the $S^3$-factor, and as the reflection along a line on the $S^2$-factor \cite[Remark 3.9]{[HS]}. The manifold $\mathbb{RP}^5\#_{S^1} (\mathbb{CP}^2\times S^1)$ is the quotient of $S^3\widetilde{\times} S^2$ by a linear action of $\Z/2$ cf. \cite[Lemma 4.1]{[PP1]}.

\end{proof}

\section{Proofs}

\subsection{Proof of Theorem \ref{Theorem Extra}}{\label{Section K}} Consider the inequivalent smooth structures that are constructed using Knot surgery \cite{[FS]}; the claims regarding the infinite set $\{E(n)_s: s\in \N\}$ follow from verbatim arguments. Proposition \ref{Proposition Knot} says that every manifold that is contained in the infinite set $\{E(n)_K:$ K is a knot in $S^3\}$ admits a $\mathcal{T}$-structure. We have then the equalities $h(E(n)_K) = 0 = Vol_K(E(n)_K)$ by \cite[Theorems A and B]{[PP1]}, and all these inequivalent smooth structures collapse with curvature bounded from below. We now prove the claim $\mathcal{Y}(E(n)_K) = 0$. First, the Yamabe invariant of these manifolds is nonnegative by \cite[Theorem 7.2]{[PP1]}. Indeed, the collapse obtained from the existence of the $\mathcal{T}$-structure implies \begin{equation}0 = Vol_{K}(M) = Vol_{Ric}(M) = Vol_{|Scal|}(M) = Vol_{Scal},\end{equation} and the last equality is equivalent to $\mathcal{Y}(M) \geq 0$. Since the Seiberg-Witten invariant \begin{equation} SW_{E(n)_K} = SW_{E(n)}\cdot \Delta_K \end{equation} where $\Delta_K$ is the Alexander polynomial \cite{[FS]}, these invariants are nontrivial, and $E(n)_K$ does not admit a Riemannian metric of positive scalar curvature \cite[Section 3]{[W]}, i.e., their Yamabe invariant is nonpositive. Thus, $\mathcal{Y}(E(n)_K) = 0$, as claimed.

If such a Riemannian manifold $(M, g)$ has $scal_g = 0$, then it is Ricci flat. Since $M$ has the homotopy type of an elliptic surface, \begin{equation} c_1^2(M) = 2c_2(M) + 3\sigma(M) = 0\end{equation} where $c_2$ is the Euler characteristic and $\sigma$ the signature. Using the generalized Gauss-Bonnet and the signature theorems \cite[Chapter 6, Section D]{[Be1]} \begin{equation} 0 = \int_M (\frac{scal_g^2}{24} - \frac{|\overset{\circ}{Ric}|^2}{2} + 2|W^+|)d\mu_g, \end{equation} we conclude that $|W^+| = 0$ and $M$ is half-conformally flat, since for a Ricci-flat manifold both the scalar curvature and the traceless Ricci tensor vanish. A scalar-flat half-conformally flat Riemannian 4-manifold is diffeomorphic to the K3 surface $E(2)$ \cite[13.30 Theorem]{[Be1]} (cf. \cite{[H]}, \cite[6.35 and 6.40 Theorems]{[Be1]}).

\subsection{Proof of Theorem \ref{Theorem 4D}}{\label{Section Proof4D}} Let $L_{\Z/p}$ be the closed smooth 4-manifold with fundamental group $\Z/p$ for $p\geq 2$ and the rational homology of the 4-sphere constructed in Proposition \ref{Proposition C1}. In particular, its Euler characteristic is two, and its signature zero. A closed smooth orientable 4-manifold with finite cyclic fundamental group is homeomorphic either to \begin{equation} a\mathbb{CP}^2 \# b \overline{\mathbb{CP}^2}\#L_{\Z/p},\end{equation} \begin{equation} aE(2)\#b(S^2\times S^2)\#L_{\Z/p},\end{equation} or to the orbit space of an involution \begin{equation} (aE(n)\# b(S^2\times S^2))/\Z/p\end{equation} 

according to Hambleton-Kreck \cite[Theorem C]{[HK2]}. In order to display a smooth structure on each homeomorphism class equipped with an $\mathcal{F}$-structure, we can proceed in different ways. 
The claim for the first two homeomorphism types follows from taking connected sum of copies of $\mathbb{CP}^2, \overline{\mathbb{CP}^2}, K3, S^2\times S^2$, and $L_{\Z/p}$, which have an $\mathcal{F}$-structure \cite[Theorem 5.9]{[PP1]}. Moreover, we can alternatively use circle sums as in Definition \ref{Definition CS}, and Proposition \ref{Proposition S1}.
To exhibit a smooth structure on the homeomorphism type of orbit spaces of the involution with fundamental group $\Z/2$ that has an $\mathcal{F}$-structure, we use the Enriques surface. This complex surface is the orbit space $S_4/\varphi$ of the hypersurface of degree 4 \begin{equation}S_4:= \{[z_0:z_1:z_2:z_3]\in \mathbb{CP}^3 : z_0^4 + z_1^4 + z_2^4 + z_3^4 = 0\} \subset \mathbb{CP}^3\end{equation} by complex conjugation $\varphi:S_4\rightarrow S_4$. It is an elliptic surface \cite[Section 3.4]{[GS]}, and \cite[Theorem 5.10]{[PP1]} implies that it admits an $\mathcal{F}$-structure. Therefore $h(M) = 0$ by \cite[Theorem A]{[PP1]}. 

The claim about the Yamabe invariant follows from \cite[Theorem 7.2]{[PP1]}, as it was argued in Section \ref{Section K}. We do point out that \begin{equation} \mathcal{Y}(a\mathbb{CP}^2 \# b \overline{\mathbb{CP}^2}\# c(S^2\times S^2)\#L_{\Z/p}) > 0\end{equation} since these connected sums admit a Riemannian metric of positive scalar curvature \cite{[GL], [SY]}. On the other hand, the homeomorphism types of nonvanishing signature do not admit such a metric by Lichnerowicz $\hat{A}$-genus obstruction \cite[Theor\`eme 2]{[Li]}.

Suppose there exists a Riemannian metric $g$ on $M$ such that $h_{top}(g) = h(M)$. Since the minimal entropy is zero, the topological entropy of the metric vanishes. Consider the universal cover $\widetilde{M}$ and the unique Riemannian metric $\tilde{g}$ that makes $(\widetilde{M}, \tilde{g})\rightarrow (M, g)$ a Riemannian submersion. Lemma \ref{Lemma PP} implies $h_{top}(\tilde{g}) = 0$. The manifold $\widetilde{M}$ is homeomorphic to $S^2\times S^2$ by \cite[Theorem D]{[PP1]}, and $M$ is homeomorphic to a rational homology 4-sphere with fundamental group of order two \cite[Theorem C]{[HK2]}. The total spaces of the two orientable $S^2$-bundles over $\mathbb{RP}^2$ are diffeomorphic to quotients \begin{equation}S^2\times S^2/(A,  - I),\end{equation} where $A\in Isom(S^2) = O(3)$ with det A $= - 1$, and $-I: S^2\times S^2$ is the antipodal involution. These manifolds are rational homology 4-spheres with fundamental group of order two, and they can be distinguished by their second Stiefel-Whitney classes \cite[Chapters 12.2 and 12.3]{[H]}. Let $g_{S^n}$ be the round metric on the n-sphere: then $h_{top}(g_{S^n}) = 0$ by Proposition \ref{Proposition CROSS}. It follows from Lemma \ref{Lemma PP} that both quotient metrics have zero topological entropy. This concludes the proof of the theorem.

\subsection{Proof of Theorem \ref{Theorem 3}}  As it was mentioned in the proof of Corollary \ref{Corollary Fstr}, the classification results of Hambleton and Su \cite[Theorem 3.7]{[HS]} say that every 5-manifold that satisfies the hypothesis of Theorem \ref{Theorem 3} can be constructed as a circle sum (see Definition \ref{Definition CS}) of copies of the following building blocks
\begin{itemize}
\item $S^2\times \mathbb{RP}^3$, $k(S^2\times S^2)\times S^1$, $\mathbb{CP}^2\times S^1$, and
\item $X^5(q)$ for $q \in \{1, 3, 5, 7\}$.
 \end{itemize}

Let us begin by proving the claim about the scalar curvature and Yamabe invariant. All the building blocks have Riemannian metrics of positive scalar curvature. Indeed, let $g$ be either the round metric on the n-sphere $g_{S^n}$, the quotient metric on the real projective n-space $g_{\mathbb{RP}^n}$, or the Fubini-Study metric on the complex projective space $g_{\mathbb{CP}^2}$. The product metrics $g + dt^2$ and $g + g$ have positive scalar curvature; here, $dt^2$ is the canonical metric on the circle. All four homotopy types of real projective 5-space admit a Riemannian metric of positive scalar curvature \cite{[KS]}. The claim regarding positive scalar curvature now follows from \cite{[GL], [SY]}, since the loops along which the cut-and-paste construction is done are submanifolds of codimension four. Moreover, the Yamabe invariant $\mathcal{Y}(M)$ is positive if and only if $M$ has a metric of positive scalar curvature. If the universal cover is not spin, then the claim was proven in \cite[Theorem 2.4]{[BR2]}.

The existence of a $\mathcal{F}$-structures was proven in Proposition \ref{Proposition S1} and Corollary \ref{Corollary Fstr}, thus impliying $h(M) = 0$ by \cite[Theorem A]{[PP1]}.

Suppose there exists a Riemannian meric $(M, g)$ such that $h_{top}(g) = h(M)$. Since $h(M) = 0$, the metric has zero topological entropy. Let $\tilde{g}$ be the unique Riemannian metric such that $(\widetilde{M}, \tilde{g})\rightarrow (M, g)$ is a Riemannian submersion. We have $h_{top}(\tilde{g}) = 0$ by Lemma \ref{Lemma PP}, and \cite[Theorem E]{[PP1]} states that the possible diffeomorphism types of $\widetilde{M}$ are $S^5, S^2\times S^3$, and the nontrivial $S^3$-bundle over $S^2$. Notice that since $H_2(SU(3)/SO(3)) \cong \Z/2\Z$, our hypothesis on the lack of torsion in $H_2(M)$ excludes the Wu manifold as a universal cover for the manifolds under consideration. Thus, $b_2(M)\leq 1$. The claim on the restriction of the homotopy types and the existence of a metric with zero topological entropy on $S^2\times \mathbb{RP}^3$ was proven in Proposition \ref{Proposition H5D} . If $\omega_2(M) = 0$, \cite[Theorem 3.7]{[HS]} states that the manifold $M$ is diffeomorphic to $S^2\times \mathbb{RP}^3$. This finishes the proof of the theorem. 

\subsection{Proof of Theorem \ref{Theorem 6D}} Smale showed that a 2-connected closed smooth 6-manifold is homeomorphic to $S^6$ or to a connected sum $k(S^3\times S^3)$ \cite[Theorem B]{[Sm]}. According to the classification of Hambleton \cite{[IH]} of orientation preserving $\Z/2$-involutions on connected sums $k(S^3\times S^3)$ with $k\in \N$, the homotopy types of orbit spaces are realized as connected sums of $S^3\times S^3$ and either $S^3\times \mathbb{RP}^3$ or the twisted $S^3$-bundle over $\mathbb{RP}^3$ (cf. \cite[Section 1]{[Wa]}). We denote the latter by $\eta_{S^3}$.

The Hopf action on $S^3$ commutes with the structure group of $\eta_{S^3}$, thus both $S^3$-bundles over the real projective 3-space and $S^3\times S^3$ admit a free circle action. The existence of a polarized $\mathcal{F}$-structure given by this action implies $MinVol(M) = 0$ by a result of Cheeger-Gromov \cite{[CG]}, and $M$ collapses with bounded sectional curvature. The existence of a $\mathcal{T}$-structure on every homotopy type now follows from \cite[Theorem 5.9]{[PP1]}. The claim regarding the Yamabe invariant follows from the existence of a metric of positive scalar curvature using the results \cite{[GL], [SY]}.

Regarding the existence of metrics of vanishing topological entropy, we have the following. The special orthogonal group $SO(4)$ is diffeomorphic to $S^3\times \mathbb{RP}^3$, and a bi-invariant metric $g$ on it has $h_{top}(g) = 0$ by Proposition \ref{Proposition CROSS}. Lemma \ref{Lemma PP} and Proposition \ref{Proposition CROSS} imply that the round metric on $S^6$ and the product of round metrics on $S^3\times S^3$ have zero topological entropy. The isometry group of $S^3\times S^3$ is $O(4)\times O(4) \rtimes \Z/2$. Mimicking the proof of Theorem \ref{Theorem 4D} and Proposition \ref{Proposition H5D}, the total space of the sphere bundle $\eta_{S^3}$ can be expressed as $S^3\times S^3/(A, -I)$, where $A\in Isom(S^3) = O(4)$, and $-I: S^3\rightarrow S^3$ is the antipodal involution. Thus, $\eta_{S^3}$ admits a metric of zero topological entropy.

We now prove that the existence of a metric of zero topological entropy restricts the possible homotopy types to the ones of \emph{rationally elliptic spaces} \cite[Chapter VI]{[FHT]}, whose definition we now recall. Let $M$ closed smooth simply connected manifold, and let $\Omega M$ denote the space of based loops on $M$. The manifold $M$ is \emph{rationally elliptic} if the homology of the loop space\begin{equation}\overset {n} {\underset {i = 0} {\sum}} dim H_i(\Omega M; \Q),\end{equation}
grows polynomially with $n$. We claim that a 2-connected 6-manifold $M$ admits a Riemannian metric of zero topological entropy if and only if it is rationally elliptic, i. e., it is diffeomorphic to either $S^6$ or $S^3\times S^3$. Suppose $M$ is a rationally elliptic space. The Hurewicz isomorphism implies $\pi_3(M) \cong H_3(M; \Q)$, and in particular \begin{equation} b_3 = dim H_3(M; \Q) = dim(\pi_3(M)\otimes \Q).\end{equation} It is shown in \cite[Corollary 1.3]{[FH]} that if $X^n$ is such a rationally elliptic manifold $X^n$ then \begin{equation}
\underset {k}{\sum} (2k - 1) dim (\pi_{2k - 1}(X^n) \otimes \Q) \leq 2n - 1\end{equation} holds. Thus, the third Betti number of an elliptic closed 2-connected 6-manifold obeys the bound $b_3(M)< 4$.  The only two such diffeomorphisms types are $S^6$ and $S^3\times S^3$. Both these manifolds are rationally elliptic spaces, and we have proven that they both admit Riemannian metrics of zero topological entropy.  To prove the converse, we proceed by contradiction. Suppose that we have Riemannian metric on $M$ with vanishing topological entropy, and assume $M$ is not rationally elliptic. It is known that the homology of the loop space with rational coefficients grows either polynomially or exponentially \cite{[FHT]}. A result of Gromov states that if the growth were exponential, then any metric $g$ on $M$ satisfies $h_{top}(g) > 0$ \cite[Theorem 2.3]{[GPa1]}, a contradiction to our initial hypothesis. 

Finally, suppose that $M$ has fundamental group of order two and that it admits a metric of vanishing topological entropy. Lemma \ref{Lemma PP} implies that so does its universal cover $\widetilde{M}$, which is diffeomorphic to $S^3\times S^3$. The results of Hambleton \cite{[IH]} imply that $M$ is homotopy equivalent to $S^3\times \mathbb{RP}^3$ or $\eta_{S^3}$.

\subsection{Proof of Theorem \ref{Theorem 1}}{\label{Section PT1}} We summarize the recent progress on unveiling infinitely many inequivalent smooth structures on simply connected 4manifolds of negative signature (see \cite{[FPS], [ABBKP], [BK], [AP1]} for the nonspin case, and \cite{[PS]} for the spin one) as follows. Let $(e, \sigma) \in \Z\times \Z$ satisfy \begin{center}$2e + 3\sigma \geq 0$ and $\frac{1}{4}(e + \sigma) \in \Z$,\end{center} and suppose that $\sigma \geq -1$. There exists an infinite set $\{X_n: n\in \N\}$ of closed simply connected homeomorphic 4-manifolds such that \begin{center}$e(X_n) = e$, $\sigma(X_n) = \sigma$, and $SW_{X_i}\neq SW_{X_j}$\end{center} for $i\neq j$.

We make use of the existence of these manifolds, and consider the set \begin{equation}\{X_n\# L_G : n\in \N\}\end{equation} of homeomorphic manifolds. The manifold $L_G$ is a rational homology 4-sphere with fundamental group $G$ of order $d$ that was built in \cite{[Ue]}, \cite[Section 3]{[HKW]}, and we recall its construction. Let $G$ be a finite group of order $d \geq 2$ that acts freely on $S^3$. Take the product $S^3/G\times S^1$, let $\gamma:= \{pt\}\times S^1$ be the loop that generates the infinite cyclic factor of the group $\pi_1(S^3/G\times S^1) = G\times \Z$, and denote its tubular neighborhood by $\nu(\gamma) \cong D^3\times S^1$. The rational homology sphere is \begin{equation}
L_G:= (S^3/G \times S^1 - \nu(\gamma)) \cup (D^2\times S^2), 
\end{equation}
with $\omega_2(L_G) = 0$. Its universal cover $\widetilde{L_G}$ is diffeomorphic to $(d - 1)(S^2\times S^2)$.

A result of Kotschick-Morgan-Taubes \cite[Proposition 2]{[KMT]} implies that \begin{equation}SW_{X_i\# L_G} \neq SW_{X_j\# L_G}\end{equation} if $i\neq j$. In particular, $\{X_n\#L_G : n\in \N\}$ is an infinite set of pairwise nondiffeomorphic yet homeomorphic 4-manifolds with Euler characteristic $e(X_n)$, signature $\sigma(X_n)$, fundamental group $G$, and second Stiefel-Whitney class $\omega_2(X_n)$.

We claim that the universal covers $\widetilde{X_n\#L_G}\rightarrow X_n\#L_G$ have the standard smooth structure $\forall n\in \N$. Looking at the homeomorphism types of the simply connected manifolds $\{X_n: n\in \N\}$ we have the following \cite{[F]}. If $\omega_2(X_n) \neq 0$, the homeomorphism type is \begin{equation} (2m_1 + 1)\mathbb{CP}^2 \# (2m_1 + 2)\overline{\mathbb{CP}^2}\end{equation} for $m_1\in \N$. If $\omega_2(X_n) = 0$, the homeomorphism type is \begin{equation}(E(2q))\#2k(S^2\times S^2)\end{equation} for $q\geq 1$, and $k\geq 0$.

It was proven in \cite{[BS]} that there exists a diffeomorphism \begin{equation}X_i\#S^2\times S^2 \rightarrow X_j\#S^2\times S^2\end{equation} $\forall i, j \in \N$ if $\omega_2(X_i)\neq 0$. It was proven in \cite{[T1]} that the existence results of infinitely many inequivalent smooth structures on simply connected spin 4-manifolds of negative signature \cite{[PS]} can be proven using logarithmic transformations as in \cite{[FPS], [BK], [ABBKP]}. Thus, the results in \cite{[BS]} apply for both nonspin and spin manifolds. Since $\widetilde{L_G}$ is diffeomorphic to the connected sum $(d - 1)(S^2\times S^2)$, and $E(2q)\# S^2\times S^2$ is diffeomorphic to $qE(2)\#q(S^2\times S^2)$ \cite{[Go]}, the theorem follows.

\subsection{Proof of Theorem \ref{Theorem 2}} The Fubini-Study metric on $\mathbb{CP}^2$ and $\overline{\mathbb{CP}^2}$, and the product metric  $g_{S^2} + g_{S^2}$ of the round metric on the 2-sphere on $S^2\times S^2$ have positive scalar curvature. Schoen-Yau \cite{[SY]} and Gromov-Lawson \cite{[GL]} have shown that the admission of a Riemannian metric of positive scalar curvature is closed under connected sums, and cut-and-paste constructions along submanifolds of codimension three. Thus, $L_G$ and the manifolds \begin{equation} (da + 1)\mathbb{CP}^2 \# (db + 1)\overline{\mathbb{CP}^2}\# (d - 1)(S^2\times S^2),\end{equation} admit such a metric.

As it was discussed in Section \ref{Section PT1}, the infinite set of pairwise nondiffeomorphic orbit spaces realizing the same homeomorphism type $\{X_n\#L_G: n\in \N\}$ have nontrivial Seiberg-Witten invariant \cite[Proposition 2]{[KMT]}, thus none of them admits a metric of positive scalar curvature \cite[Section 3]{[W]}. The standard smooth structure on $X_n\#L_G$ is a connected sum of copies of $\mathbb{CP}^2$ with either orientation and $L_G$, and thus admits a Riemannian metric of positive scalar curvature. 

Regarding the existence of infinitely many inequivalent smooth structures on the orbit space and on the universal covers that do not admit such a metric, we argue as follows. Infinite sets of pairwise non-diffeomorphic symplectic manifolds with fundamental group $\Z/p$ and $\Z/p\oplus \Z/q$ for every Euler characteristic and signature of the homeomorphic but not diffeomorphic manifolds described in the proof of Theorem \ref{Theorem 1}  were constructed in \cite{[BK], [AP1], [T2]}. Using the pullback of the symplectic form, one concludes that their universal covers also admit a symplectic structure. Taubes' theorem \cite{[Ta]} says that the Seiberg-Witten invariant of a symplectic manifold $(X, \omega)$ is $SW_X(\pm c_1(X, \omega)) = \pm 1$, and thus none of these manifolds admits a metric of positive scalar curvature \cite[Section 3]{[W]}.

\end{document}